\pdfoutput=1
\documentclass[11pt]{amsart}

\usepackage{graphicx}
\usepackage[english]{babel}
\usepackage[T1]{fontenc}
\usepackage[latin1]{inputenc}
\usepackage{amsfonts}
\usepackage{amssymb}
\usepackage{amsthm}
\usepackage{amsmath}
\usepackage{tikz}
\usepackage{float}
\usepackage{enumerate}
\usepackage{accents}
\usepackage{mathtools}
\usepackage{mathrsfs}
\usepackage{comment}
\usepackage{afterpage}
\usepackage{bbm}
\usepackage{dsfont}
\usepackage{stmaryrd}
\usepackage{hyperref}
\usepackage{mleftright}
\usepackage{subfig}
\usepackage{soul}
\usepackage{tikz-cd} 
\usepackage{fullpage}
\usepackage{cleveref}

\numberwithin{equation}{section}

\theoremstyle{plain}
\newtheorem{thm}{Theorem}[section]
\newtheorem*{thm*}{Theorem}
\newtheorem{prop}[thm]{Proposition}
\newtheorem*{prop*}{Proposition}
\newtheorem{cor}[thm]{Corollary}
\newtheorem*{cor*}{Corollary}
\newtheorem{lem}[thm]{Lemma}

\theoremstyle{definition}
\newtheorem{defn}[thm]{Definition}
\newtheorem*{defn*}{Definition}
\newtheorem{ex}[thm]{Example}
\newtheorem{rmk}[thm]{Remark}
\newtheorem*{rmk*}{Remarks}

\newtheorem*{conj*}{Conjecture}
\newtheorem{quest}[thm]{Question}
\newtheorem*{quest*}{Question}

\newtheoremstyle{blue-environment}{}{}{}{}{\color{blue}\bfseries}{.}{ }{}
\theoremstyle{blue-environment}

\newcommand{\acts}{\curvearrowright}
\newcommand{\ra}{\rightarrow}

\newcommand{\sq}{\subseteq}

\newcommand{\x}{\times}
\renewcommand{\o}{\circ}
\newcommand{\id}{\mathrm{id}}

\newcommand{\mc}{\mathcal}
\newcommand{\mf}{\mathfrak}

\newcommand{\R}{\mathbb{R}}
\newcommand{\Z}{\mathbb{Z}}
\newcommand{\N}{\mathbb{N}}

\newcommand{\C}{\mathbb{C}}

\newcommand{\s}{\sigma}
\newcommand{\eps}{\epsilon}
\newcommand{\Om}{\Omega}

\newcommand{\g}{\gamma}

\newcommand{\CAT}{{\rm CAT(0)}}
\newcommand{\Out}{{\rm Out}}
\newcommand{\Aut}{{\rm Aut}}

\begin{document}

\title{Automorphism growth and group decompositions} 

\author[E.\,Fioravanti]{Elia Fioravanti}\address{Institute of Algebra and Geometry, Karlsruhe Institute of Technology}\email{elia.fioravanti@kit.edu} 
\thanks{The author is supported by Emmy Noether grant 515507199 of the Deutsche Forschungsgemeinschaft (DFG)}

\begin{abstract}
    Let $G$ be a finitely generated group with an automorphism $\varphi\in\Aut(G)$, or an outer automorphism $\phi\in\Out(G)$. Suppose that $G$ decomposes into simpler pieces on which the growth behaviour of $\varphi$ and $\phi$ is known, particularly as a direct product, free product, or graph of groups. This article is devoted to the (often not entirely straightforward) problem of deducing information about the growth rates of $\varphi$ and $\phi$ on the whole $G$.
\end{abstract}

\maketitle

\section{Introduction}

Let $G$ be a finitely generated group. Fix a finite generating set $S\sq G$ and denote by $|g|$ the resulting \emph{word length} of an element $g\in G$. We can similarly consider the \emph{conjugacy length}
\[ \|g\|:=\min_{h\in G} |hgh^{-1}| .\]
Given an automorphism $\varphi\in\Aut(G)$ and an outer automorphism $\phi\in\Out(G)$, it is natural to wonder about the speed of growth of the sequences 
\begin{align*}
    n&\mapsto |\varphi^n(g)|, & n&\mapsto\|\phi^n(g)\| .
\end{align*}
We consider such sequences up to bi-Lipschitz equivalence, meaning that we declare $a_n\sim b_n$, for two sequences $a_n,b_n$, if there exists a constant $C>0$ such that $a_n\leq Cb_n$ and $b_n\leq Ca_n$ for all $n\in\N$. Up to this equivalence, the sequences $n\mapsto |\varphi^n(g)|$ and $n\mapsto\|\phi^n(g)\|$ are completely independent of the choice of the finite generating set of $G$ used to define $|\cdot|$ and $\|\cdot\|$, and we refer to them as the \emph{growth rates} of $g$ under $\varphi$ and $\phi$, respectively.

Given a finitely generated group $G$ with automorphisms $\varphi\in\Aut(G)$ and $\phi\in\Out(G)$, completely describing growth rates is often a difficult problem. Already for the free group $F_n$, this problem is inapproachable by elementary techniques, and it was solved only relatively recently \cite{Levitt-GAFA} using the refined train-track technology developed in \cite{BH92,BFH1,BFH2,Bridson-Groves}. A complete description of growth rates for automorphisms of hyperbolic groups was recently announced in \cite{CHHL}, also using JSJ decompositions \cite{RS97,GL-JSJ}. There, all growth rates are bi-Lipschitz equivalent to sequences of the form $n\mapsto n^p\lambda^n$ with $p\in\N$ and $\lambda\geq 1$ a Perron number.

At the same time, there are also finitely generated groups with automorphisms displaying rather exotic growth behaviours; see for instance \cite{Coulon} and \Cref{ex:exotic} below. Very little is obvious about the general properties of growth rates, and countless natural questions remain about them. We list a few in \Cref{quest}.

In this article, we consider a finitely generated group with an automorphism $\varphi\in\Aut(G)$ and its outer class $\phi:=[\varphi]\in\Out(G)$, in each of the following three cases:
\begin{enumerate}
    \item $G$ splits as $G_1\x\dots\x G_k\x\Z^m$ for centreless, directly indecomposable groups $G_i$ (\Cref{sect:direct_products});
    \item $G$ splits as a $\phi$--invariant graph of groups with undistorted vertex groups $G_i$ (\Cref{subsec:invariant_GOGs});
    \item $G$ splits as a free product $G_1\ast\dots\ast G_k\ast F_m$ for freely indecomposable groups $G_i$ (\Cref{subsec:free_products}).
\end{enumerate}
In each of these cases, we give a description of the growth rates of $\varphi$ and $\phi$ on $G$, assuming that enough is known about the growth rates of the restrictions of $\varphi$ and $\phi$ to the $G_i$. It should not be surprising that such descriptions are possible, but the details of this are not always straightforward, albeit based on standard techniques (particularly, train tracks in the free product case).

The main results of the article are \Cref{cor:growth_direct_product}, \Cref{prop:growth_GOGs} and \Cref{prop:growth_estimate_fully_irreducible}, corresponding to each of the three situations described above. These are all used in the author's work on growth rates of automorphisms of right-angled Artin groups, right-angled Coxeter groups and, more generally, compact special groups \cite{Fio11}.

\smallskip
{\bf Acknowledgements.} 
I am grateful to Martin Bridson, R\'emi Coulon, Camille Horbez and Ric Wade for helpful conversations related to the contents of this article.

\section{Growth rates}\label{sect:growth}

\subsection{General properties}\label{sub:general_growth}

If $a,b\colon\N\ra\R_{>0}$ are two sequences, we write $a\preceq b$ if there exists a constant $C>0$ such that $a_n\leq Cb_n$ for all $n\geq 0$. We say that $a$ and $b$ are \emph{equivalent}, written $a\sim b$, if we have both $a\preceq b$ and $b\preceq a$. We denote by $[1]$ the equivalence class of constant sequences.

\begin{defn}
    A \emph{growth rate} is a $\sim$--equivalence class $[a_n]$ of sequences in $(\R_{>0})^{\N}$ with $[a_n]\succeq [1]$. We denote by $(\mf{G},\preceq)$ the set of growth rates with the poset structure induced by the relation $\preceq$.
\end{defn}

Given two growth rates $[a_n],[b_n]\in\mf{G}$, the sum $[a_n]+[b_n]:=[a_n+b_n]$ is a well-defined growth rate. We will often simply write $a_n\preceq b_n$ and $a_n\sim b_n$, omitting the square brackets when this streamlines notation without causing ambiguities. 

Let $G$ be a group with a finite generating set $S$. We denote by $|\cdot|_S$ and $\|\cdot\|_S$ the \emph{word length} and \emph{conjugacy length} on $G$ associated to $S$, as defined in the Introduction. If $T$ is a different finite generating set, then $|\cdot|_T$ and $\|\cdot\|_T$ are bi-Lipschitz equivalent to $|\cdot|_S$ and $\|\cdot\|_S$, respectively. For this reason, we will simply write $|\cdot|$ and $\|\cdot\|$ from now on, as the choice of generating set will play no role. Occasionally, in the presence of two finitely generated groups $G$ and $H$, we will write $|\cdot|_G$ and $|\cdot|_H$ to distinguish between the two length notions.

A finitely generated subgroup $H\leq G$ is \emph{undistorted} if the inclusion $(H,|\cdot|_H)\hookrightarrow (G,|\cdot|_G)$ is bi-Lipschitz. Similarly, we say that $H$ is \emph{conjugacy-undistorted} if the inclusion $(H,\|\cdot\|_H)\hookrightarrow (G,\|\cdot\|_G)$ is bi-Lipschitz. In general, undistortion alone does not suffice to deduce conjugacy-undistortion, and one rather needs some form of convexity.

\begin{ex}\label{ex:conj_undist}
The following are straightforward observations.
    \begin{enumerate}
        \item Quasi-convex subgroups of hyperbolic groups are undistorted and conjugacy-undistorted.
        \item Let $G\acts X$ be a proper cocompact action on a $\CAT$ space. If $H\leq G$ acts cocompactly on a convex subspace $Y\sq X$, then $H$ is undistorted and conjugacy-undistorted in $G$.
    \end{enumerate}
\end{ex}

Consider now $\varphi\in\Aut(G)$ and its outer class $\phi\in\Out(G)$. The following is classical.

\begin{lem}\label{lem:aut_bilip}
    The map $\varphi\colon G\ra G$ is bi-Lipschitz with respect to both $|\cdot|$ and $\|\cdot\|$.
\end{lem}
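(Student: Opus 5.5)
The plan is to prove the Lipschitz bound for $\varphi$ directly and then obtain the reverse inequality by applying the same argument to $\varphi^{-1}$, which is also an automorphism of $G$. Fix the finite generating set $S$ and set
\[ C:=\max_{s\in S\cup S^{-1}}|\varphi(s)|, \]
which is finite because $S$ is finite. The constant for $\varphi^{-1}$ is then $C':=\max_{s}|\varphi^{-1}(s)|$.

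\emph{Word length.} Write $g$ as a geodesic word $g=s_1\cdots s_\ell$ with $\ell=|g|$ and $s_i\in S\cup S^{-1}$. Since $\varphi$ is a homomorphism, $\varphi(g)=\varphi(s_1)\cdots\varphi(s_\ell)$, and the triangle inequality gives $|\varphi(g)|\leq C\ell=C|g|$. Applying the same bound to $\varphi^{-1}$ at the element $\varphi(g)$ yields $|g|\leq C'|\varphi(g)|$. Together these show that $\varphi$ is bi-Lipschitz for $|\cdot|$. Because $|gh^{-1}|$ is the word metric, it is also worth recording the metric version $|\varphi(g)\varphi(h)^{-1}|=|\varphi(gh^{-1})|\leq C|gh^{-1}|$.

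\emph{Conjugacy length.} Choose $h\in G$ with $\|g\|=|hgh^{-1}|$. Then $\varphi(h)\varphi(g)\varphi(h)^{-1}=\varphi(hgh^{-1})$ is a conjugate of $\varphi(g)$, so
\[ \|\varphi(g)\|\leq|\varphi(hgh^{-1})|\leq C|hgh^{-1}|=C\|g\|. \]
The reverse inequality again follows by running the argument for $\varphi^{-1}$. The same computation shows that $\|\cdot\|$ is constant on conjugacy classes, so $\|\phi(g)\|$ is well defined up to the same constants for any representative of $\phi$. This is presumably the form in which the lemma will be used later.

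There is no genuine obstacle in this argument. The only point needing care is that the conjugacy bound relies on $\varphi$ mapping conjugates of $g$ to conjugates of $\varphi(g)$, so that the minimum defining $\|\varphi(g)\|$ is taken over a set containing $\varphi(hgh^{-1})$.
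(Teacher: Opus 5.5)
Your argument is correct and is the standard verification. The paper states this lemma as classical and gives no proof; bounding $|\varphi(s)|$ over the generators, applying that bound at a minimal-length conjugate for $\|\cdot\|$, and running the same argument for $\varphi^{-1}$ is exactly what is omitted. One small correction to your closing aside: representatives of $\phi$ differ by inner automorphisms and $\|\cdot\|$ is conjugation-invariant by definition, so $\|\phi(g)\|$ is exactly well defined, not merely well defined up to constants.
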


As explained in the Introduction, $\varphi$ and $\psi$ attach a growth rate to each element of $G$.

\begin{defn}
    The \emph{growth rate} of an element $g\in G\setminus\{1\}$ under $\varphi$ is the $\sim$--equivalence class $\big[|\varphi^n(g)|\big]$ in $\mf{G}$. Similarly, the \emph{growth rate} of $g$ under $\phi$ is\footnote{Note that $\phi^n(g)$ is not a well-defined element of $G$, but it is a well-defined conjugacy class.} the equivalence class $\big[\|\phi^n(g)\|\big]\in\mf{G}$. 
\end{defn}

Since $\|\cdot\|\leq |\cdot|$, we always have $\big[\|\phi^n(g)\|\big]\preceq \big[|\varphi^n(g)|\big]$. Moreover, $\big[|\varphi^n(g)|\big]\sim [1]$ holds if and only if a power of $\varphi$ fixes $g$, and $\big[\|\phi^n(g)\|\big]\sim[1]$ holds if and only if a power of $\phi$ preserves the conjugacy class of $g$. For this reason, it makes sense to artificially define the growth rate of the identity of $G$ to be $[1]$, even though the identity has length $0$.

We denote by $\mc{G}(\varphi)\sq\mf{G}$ and $\mf{g}(\phi)\sq\mf{G}$ (or $\mc{G}(G,\varphi)$ and $\mf{g}(G,\phi)$ if there is any ambiguity) the sets of all growth rates of $\varphi$ and $\phi$, as $g$ varies in $G$. In keeping with the above conventions, we will generally denote growth rates of automorphisms by the letter $\mc{O}$, and growth rates of outer automorphisms by the letter $\mf{o}$.

\begin{rmk}\label{rmk:restriction_new}
    Consider a subgroup $H\leq G$, and let $\phi\in\Out(G)$ preserve the $G$--conjugacy class of $H$. We can define a ``restriction'' $\phi|_H\in\Out(H)$ by choosing a representative $\varphi\in\Aut(G)$ of $\phi$ with $\varphi(H)=H$, and then considering the outer class $[\varphi|_H]\in\Out(H)$. However, the restriction $\phi|_H$ is not uniquely defined: denoting by $N_G(H)$ the normaliser of $H$, the conjugation action $N_G(H)\acts H$ determines a subgroup $C^G_H\leq\Out(H)$, and it is only the coset $\phi|_H\cdot C^G_H$ that is unique.

    Nevertheless, if $H\leq G$ is conjugacy-undistorted, then the set of growth rates $\mf{g}(\phi|_H)$ is well-defined. Indeed, any two possible restrictions $\phi|_H\in\Out(H)$ differ by the restriction to $H$ of an inner automorphism of $G$, and so conjugacy lengths grow at the same speed under their powers. (Since $H$ is conjugacy-undistorted, it does not matter whether we use $\|\cdot\|_H$ or $\|\cdot\|_G$.)
\end{rmk}

There are two additional elements of $\mf{G}$ that we can associate with $\varphi$ and $\phi$. In some sense, they play the role of a ``maximum'' for the sets $\mc{G}(\varphi)$ and $\mf{g}(\phi)$, but it is important to stress that, a priori, they do {\bf not} lie in $\mc{G}(\varphi)$ or $\mf{g}(\phi)$. Fixing any finite generating set $S\sq G$, we write:
\begin{align*}
    \overline{\mc{O}}_{\rm top}(\varphi)&:=\big[\s_S(\varphi^n)\big],  \qquad\text{where} \quad \s_S(\varphi):=\max_{s\in S}|\varphi(s)| , \\
    \overline{\mf{o}}_{\rm top}(\phi)&:=\big[\tau_S(\phi^n)\big],  \qquad\text{where} \quad \tau_S(\phi):=\min_{x\in G}\max_{s\in S}|x\varphi(s)x^{-1}| .
\end{align*}
Note that $\overline{\mc{O}}_{\rm top}(\varphi)$ and $\overline{\mf{o}}_{\rm top}(\phi)$ are independent of the choice of $S$, and we have $\overline{\mc{O}}_{\rm top}(\varphi)\preceq\overline{\mf{o}}_{\rm top}(\phi)$.

\begin{lem}\label{lem:o_bar_is_faster}
    We have $\mc{O}\preceq\overline{\mc{O}}_{\rm top}(\varphi)$ for all $\mc{O}\in\mc{G}(\varphi)$, and $\mf{o}\preceq\overline{\mf{o}}_{\rm top}(\phi)$ for all $\mf{o}\in\mf{g}(\phi)$. 
\end{lem}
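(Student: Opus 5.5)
The plan is to bound $|\varphi^n(g)|$ by $|g|\cdot\s_S(\varphi^n)$, and, in the outer case, to conjugate $\phi^n(g)$ by an element realising $\tau_S(\phi^n)$. Fix a finite generating set $S$ and an element $g\in G$. For $g=1$ both growth rates equal $[1]$ by convention. Since $\s_S(\varphi^n)\geq 1$ and $\tau_S(\phi^n)\geq 1$ (at least once $S$ contains a nontrivial element), $[1]$ is dominated by both top rates, so we may assume $g\neq 1$.

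\emph{Automorphism case.} Write $g=s_1\cdots s_k$ with $s_i\in S\cup S^{-1}$ and $k=|g|$. Then $\varphi^n(g)=\varphi^n(s_1)\cdots\varphi^n(s_k)$. Since $|\varphi^n(s^{-1})|=|\varphi^n(s)|$, the triangle inequality gives
\[ |\varphi^n(g)|\leq |g|\cdot\s_S(\varphi^n) \]
for every $n$. As $|g|$ is a constant independent of $n$, this yields $\big[|\varphi^n(g)|\big]\preceq\overline{\mc{O}}_{\rm top}(\varphi)$.

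\emph{Outer case.} For each $n$, choose a representative $\varphi_n$ of $\phi^n$ and an element $x_n\in G$ attaining the minimum in the definition of $\tau_S(\phi^n)$. The automorphism $\psi_n:=x_n\varphi_n(\cdot)x_n^{-1}$ also represents $\phi^n$, and it satisfies $\max_{s\in S}|\psi_n(s)|=\tau_S(\phi^n)$. The element $\psi_n(g)$ lies in the conjugacy class $\phi^n(g)$. Applying the same expansion of $g$ as a word in $S\cup S^{-1}$ gives
\[ \|\phi^n(g)\|\leq|\psi_n(g)|\leq|g|\cdot\tau_S(\phi^n), \]
and therefore $\big[\|\phi^n(g)\|\big]\preceq\overline{\mf{o}}_{\rm top}(\phi)$.

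The only point needing care is the outer case. There, $\tau_S(\phi^n)$ must be read as computed from some representative of $\phi^n$, and the quantity $\min_x\max_s|x\varphi_n(s)x^{-1}|$ does not depend on which representative is chosen: changing the representative by an inner automorphism just reparametrises $x$. The bound must also be applied with a conjugator $x_n$ that depends on $n$. This is harmless, because conjugacy length is invariant under conjugation, so no uniformity in $x_n$ is needed.
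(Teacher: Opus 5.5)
Your proof is correct and is essentially the paper's argument: the paper simply records the two inequalities $|\varphi^n(g)|\leq |g|_S\cdot\sigma_S(\varphi^n)$ and $\|\phi^n(g)\|\leq \|g\|_S\cdot\tau_S(\phi^n)$ and concludes immediately. The only difference is cosmetic. In the outer bound the paper uses the conjugacy length $\|g\|_S$ (implicitly replacing $g$ by a shortest conjugate), where you use $|g|$; this is immaterial, since either is a constant independent of $n$.
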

\begin{proof}
    For all $g\in G$ and $n\in\N$, we have $|\varphi^n(g)|\leq |g|_S\cdot\s^S(\varphi^n)$ and $\|\phi^n(g)\|\leq \|g\|_S\cdot\tau^S(\phi^n)$. The lemma immediately follows from these two inequalities.
\end{proof}

As mentioned, many natural questions about growth rates of general automorphisms seem to be open. I list here some that I find interesting. There seems to be no reason to expect that the answer to any of these should be `yes'.

\begin{quest}\label{quest}
    Let $G$ be finitely presented. Consider $\varphi\in\Aut(G)$ and its outer class $\phi\in\Out(G)$.
    \begin{enumerate}
        \item Do the sets $\mc{G}(\varphi)$ and $\mf{g}(\phi)$ always have a $\preceq$--maximum?
        \item Do the growth rates $\overline{\mc{O}}_{\rm top}(\varphi)$ and $\overline{\mf{o}}_{\rm top}(\phi)$ always lie in $\mc{G}(\varphi)$ and $\mf{g}(\phi)$, respectively?
        \item Are the sets $\mc{G}(\varphi)$ and $\mf{g}(\phi)$ always finite?
        \item Do we have $\overline{\mc{O}}_{\rm top}(\varphi)\sim\overline{\mf{o}}_{\rm top}(\phi)$ whenever $\overline{\mf{o}}_{\rm top}(\phi)$ grows at least exponentially?
        \item Is the stretch factor ${\rm str}(\phi):=\sup_{g\in G}\,\limsup_n\,\|\phi^n(g)\|^{1/n}$ always an algebraic integer?
        \item For an element $g\in G$, do the limits $\lim_n\,|\varphi^n(g)|^{1/n}$ and $\lim_n\,\|\phi^n(g)\|^{1/n}$ always exist?
        \item If an element of $\mc{G}(\varphi)\cup\mf{g}(\phi)$ is faster than any polynomial, does it grow at least exponentially?
    \end{enumerate}
\end{quest}

To the best of my knowledge, the above questions are open also for \emph{finitely generated} groups, except for Item~(7) which was settled in \cite{Coulon}. The following is another simple construction of exotic examples that was communicated to me by Martin Bridson.

\begin{ex}\label{ex:exotic}
    There exist finitely presented groups $G$ with automorphisms $\varphi\in\Aut(G)$ and $\phi\in\Out(G)$ having all sorts of growth rates that are bounded above by a polynomial function, while not being exactly polynomial themselves. In particular, one can arrange these so that $\preceq$ is not a total order on $\mc{G}(\varphi)$ or $\mf{g}(\phi)$. We briefly explain the construction here.

    Let $\alpha\colon\Z\ra\N$ be a subadditive function such that $\alpha^{-1}(0)=\{0\}$ and $\alpha(n)=\alpha(-n)$ for all $n$, and such that the cardinality of the preimages $\alpha^{-1}([0,n])$ grows at most exponentially in $n$. Let $H$ be a finitely generated group with an element $h\in H$ such that the function $n\mapsto |h^n|_H$ is bi-Lipschitz equivalent to $\alpha$. Such pairs $(H,h)$ always exist by \cite{Olsh1} and, if $\alpha$ is a computable function, one can always take $H$ to be finitely presented \cite{Olsh2}.

    Now, consider the free group $F=\langle x_1,\dots,x_m\rangle$ for some $m\geq 2$. Let $\psi\in\Aut(F)$ be the automorphism that fixes $x_1$ and maps $x_k\mapsto x_kx_{k-1}$ for all $k\geq 2$. Observing that $\psi^n(x_k)=x_k\cdot x_{k-1}\psi(x_{k-1})\dots\psi^{n-1}(x_{k-1})$, one easily deduces that $|\psi^n(x_k)|_F\sim n^{k-1}$. 
    
    Choosing $(H,h)$ as above, we can form the amalgam
    \[ G:= F \underset{x_1=h}{\ast} H ,\]
    which is finitely generated (resp.\ presented) if $H$ is. The automorphism $\psi$ extends to an automorphism $\varphi:=\psi\ast\id_H\in\Aut(G)$, and we denote by $\phi\in\Out(G)$ its outer class.
    \begin{enumerate}
    \setlength\itemsep{.25em}
        \item For the element $x_2$, we always get
         \[ \|\phi^n(x_2)\|_G \sim |\varphi^n(x_2)|_G \sim |x_2x_1^n|_G\sim |h^n|_H\sim \alpha(n) .\]
        In particular, the growth rate $[\alpha(n)]$ lies in both $\mc{G}(\varphi)$ and $\mf{g}(\phi)$.
        \item To describe the growth rate of $x_k$ for $k\geq 3$, we need to restrict $\alpha$ a little more: suppose that the restriction $\alpha|_{\N}\colon\N\ra\N$ is weakly increasing. This implies that $\sum_{j=1}^n\alpha(j) \sim n\alpha(n)$. Indeed, monotonicity gives $\frac{n}{2}\alpha(\frac{n}{2})\leq \sum_{j=1}^n\alpha(j)\leq n\alpha(n)$, and subadditivity yields $\frac{n}{2}\alpha(\frac{n}{2})\geq\frac{n}{4}\alpha(n)$. Armed with this equivalence, we obtain for all $k\geq 3$:
        \[ \|\phi^n(x_k)\|_G \sim |\varphi^n(x_k)|_G \sim \sum_{j=1}^{n-1}|\varphi^j(x_{k-1})|_G \sim n^{k-2}\alpha(n) .\]
        Thus, for each $0\leq k\leq m-2$, the growth rate $[n^k\alpha(n)]$ lies in $\mc{G}(\varphi)$ and $\mf{g}(\phi)$.
        \item Finally, we consider a double version of the above construction. Let $\alpha,\beta\colon\Z\ra\N$ be functions as at the start of the example, and let $(H,h)$ and $(L,\ell)$ be corresponding group/element pairs. Let $F'=\langle x_1,x_2,y_1,y_2\rangle$ be the free group on four generators, and consider the automorphism $\psi'\in\Aut(F')$ that fixes $x_1$ and $y_1$, and maps $x_2\mapsto x_2x_1$ and $y_2\mapsto y_2y_1$. 
        
        Consider the double amalgam
        \[ G':= L \underset{\ell=y_1}{\ast} F' \underset{x_1=h}{\ast} H ,\]
        and the automorphism $\varphi'\in\Aut(G')$ given by $\varphi':=\id_L\ast\psi'\ast\id_H$. Let again $\phi'\in\Out(G')$ be its outer class. Exactly as above, both growth rates $[\alpha(n)]$ and $[\beta(n)]$ now lie in $\mc{G}(\varphi')$ and $\mf{g}(\phi')$. We can choose $\alpha$ and $\beta$ so that these two growth rates are $\preceq$--incomparable: for instance, we can choose $\alpha$ arbitrarily and then construct $\beta$ as follows. The function $\beta$ alternates between being constant on long segments of $\N$, and growing linearly with very small slopes on other long segments of $\N$.
        We leave to the reader the technical check that this can be arranged so that $\beta\not\preceq\alpha$ and $\alpha\not\preceq\beta$, while also ensuring that $\beta$ is computable, sublinear, and with an exponential bound on the size of initial-segment preimages.

        This gives sets $\mc{G}(\varphi')$ and $\mf{g}(\phi')$ on which $\preceq$ is not a total order. However, these sets still have $\preceq$--maxima given by the growth rate $[\alpha(n)+\beta(n)]$, which is realised on $x_1y_1\in G$.
    \end{enumerate}
\end{ex}

\subsection{Tameness and docility}

In view of the pathologies outlined in \Cref{ex:exotic} and \Cref{quest}, it is often useful to restrict to automorphisms whose growth rates are of the following kinds, as this covers all the most natural examples.

\begin{defn}\label{defn:tame}
    An abstract growth rate $[x_n]\in\mf{G}$ is: 
    \begin{enumerate}
        \item \emph{pure} if $[x_n]\sim [n^p\lambda^n]$ for some $\lambda>1$ and $p\in\N$;
        \item \emph{$(\lambda,p)$--tame}, for some $\lambda>1$ and $p\in\N$, if we have $[x_n]\sim[a_n\lambda^n]$ for a weakly increasing sequence $a_n$ with $[1]\preceq [a_n]\preceq [n^p]$.
    \end{enumerate}
\end{defn}

Pure growth rates are clearly tame, but the converse does not hold. Moreover, pure growth rates form a \emph{totally} ordered subset of $(\mf{G},\preceq)$, while tame ones do not. Also note that tame rates are at least exponential, as we ask that $\lambda>1$. Tame growth rates are particularly useful when studying automorphisms of right-angled Artin groups and special groups, as there tameness can sometimes be shown even when purity remains out of reach \cite{Fio11}. 

Recall that, for any $\varphi\in\Aut(G)$ with outer class $\phi\in\Out(G)$, we have $\overline{\mc{O}}_{\rm top}(\varphi)\succeq\overline{\mf{o}}_{\rm top}(\phi)$.

\begin{defn}\label{defn:tame_aut}
    Let $G$ be finitely generated. Consider $\varphi\in\Aut(G)$ with outer class $\phi\in\Out(G)$. 
    \begin{enumerate}
    \item The automorphism $\varphi$ is \emph{sound} if we have $\overline{\mc{O}}_{\rm top}(\varphi)\sim\overline{\mf{o}}_{\rm top}(\phi)$. The outer automorphism $\phi$ is \emph{sound} if all its representatives $\varphi'\in\Aut(G)$ are sound.
    \item The automorphism $\varphi$ is \emph{docile} if, at the same time, $\varphi$ is sound and $\overline{\mc{O}}_{\rm top}(\varphi)$ is tame. The outer automorphism $\phi$ is \emph{docile} if $\varphi$ is docile (this is independent of the choice of representative, by \Cref{rmk:tameness_independent_of_representative} below). We will also say that $\varphi$ and $\phi$ are $(\lambda,p)$--docile if we wish to specify the parameters for which the rates $\overline{\mc{O}}_{\rm top}(\varphi)$ and $\overline{\mf{o}}_{\rm top}(\phi)$ are $(\lambda,p)$--tame.
    \end{enumerate}
\end{defn}

Automorphisms are sound when, under their iterates, word length does not grow much faster than conjugacy length: applying powers of the automorphism does not yield elements most of whose word length is due to a failure to be ``cyclically reduced''. Exponentially-growing automorphisms of free and surface groups are all sound \cite{BH92,Levitt-GAFA,Thurston-BAMS}, while inner automorphisms clearly are not. 

\begin{rmk}\label{rmk:tame_implies_sum-stable}
    If $[x_n]\in\mf{G}$ is tame, we have $\big[\sum_{i\leq n}x_i\big]\sim[x_n]$. Indeed, since $[x_n]\sim[a_n\lambda^n]$ for a weakly increasing sequence $a_n$ by definition, we obtain $\sum_{i\leq n}a_i\lambda^i\leq a_n\sum_{i\leq n}\lambda^i\preceq a_n\lambda^n$.
\end{rmk}

The need for \Cref{rmk:tame_implies_sum-stable} is precisely what motivated us to require the sequence $a_n$ in \Cref{defn:tame} to be weakly increasing (and that $\lambda\neq 1$). We will use the previous remark in various forms (in \Cref{subsec:invariant_GOGs} and \Cref{subsec:free_products}), one of which is the following. Given $\varphi\in\Aut(G)$ and an element $g\in G$, we can define the elements $g_n:=g\varphi(g)\varphi^2(g)\dots\varphi^n(g)$. If the growth rate $\big[\,|\varphi^n(g)|\,\big]$ is tame, then $|g_n|\preceq |\varphi^n(g)|$. 
This leads to the next observation:

\begin{rmk}\label{rmk:tameness_independent_of_representative}
    If $\phi\in\Out(G)$ has a docile representative $\varphi\in\Aut(G)$, then \emph{all} its representatives are docile. Indeed, suppose that $\varphi$ is docile and let $\psi(x)=g\varphi(x)g^{-1}$ be another representative. One the one hand, we immediately have $\overline{\mc{O}}_{\rm top}(\psi)\succeq\overline{\mf{o}}_{\rm top}(\phi)\sim \overline{\mc{O}}_{\rm top}(\varphi)$. On the other, setting again $g_n:=g\varphi(g)\varphi^2(g)\dots\varphi^n(g)$, we have
    \[ |\psi^n(x)|=|g_{n-1}\varphi^n(x)g_{n-1}^{-1}|\leq 2|g_{n-1}| + |\varphi^n(x)| ,\]
    for all $x\in G$ and $n\in\N$. Thus, recalling \Cref{rmk:tame_implies_sum-stable} and the definition of $\overline{\mc{O}}_{\rm top}(\cdot)$, we also get $\overline{\mc{O}}_{\rm top}(\psi)\preceq\overline{\mc{O}}_{\rm top}(\varphi)$. In conclusion $\overline{\mc{O}}_{\rm top}(\psi)\sim\overline{\mc{O}}_{\rm top}(\varphi)$, which implies that $\psi$ is docile.
\end{rmk}

Finally, we say that an element $[x_n]\in\mf{G}$ is \emph{sub-polynomial} if $[x_n]\preceq[n^p]$ for some $p\in\N$. An automorphism $\varphi\in\Aut(G)$ is \emph{sub-polynomial} if $\overline{\mc{O}}_{\rm top}(\varphi)$ is sub-polynomial.
Note that, a priori, the fact that an outer class $\phi\in\Out(G)$ has sub-polynomial $\overline{\mf{o}}_{\rm top}(\phi)$ does not imply that any automorphism representing $\phi$ is sub-polynomial. However, we have the following analogue of \Cref{rmk:tameness_independent_of_representative}.

\begin{rmk}\label{rmk:sub-polynomial_independent_of_representative}
    If $\phi\in\Out(G)$ has a representative $\varphi\in\Aut(G)$ with $\overline{\mc{O}}_{\rm top}(\varphi)\preceq n^p$ for some $p\in\N$, then all representatives $\psi\in\Aut(G)$ of $\phi$ satisfy the weaker inequality $\overline{\mc{O}}_{\rm top}(\psi)\preceq n^{p+1}$. This is shown as in the previous remark: if $\psi(x)=g\varphi(x)g^{-1}$, we have $|\psi^n(x)|\leq 2|g_{n-1}|+|\varphi^n(x)|$ and:
    \[ |g_n|\leq\sum_{i\leq n}|\varphi^i(x)|\preceq \sum_{i\leq n} i^p\leq n^{p+1}. \]
\end{rmk}

\section{Direct products}\label{sect:direct_products}

\subsection{Abelian factors}\label{sub:abelian_factors}

Consider a product of the form $G=H\x A$, where $H$ is a finitely generated group with trivial centre, and $A\cong\Z^N$ for some $N\geq 1$. The automorphism group $\Aut(G)$ can be described as follows. Consider the set $\mc{M}(H,A)$ of formal matrices 
\begin{equation}\label{eq:matrix}
    \begin{pmatrix} \varphi & 0 \\ \alpha & \psi \end{pmatrix}
\end{equation} 
where $\varphi\in\Aut(H)$, $\psi\in\Aut(A)\cong{\rm GL}_N(\Z)$ and $\alpha\in \mathrm{H}^1(H,A)$; in other words, $\alpha$ is a homomorphism $H\ra A$. We can make $\mc{M}(H,A)$ into a group by endowing it with a natural product:
\[ \begin{pmatrix} \varphi_1 & 0 \\ \alpha_1 & \psi_1 \end{pmatrix}\cdot \begin{pmatrix} \varphi_2 & 0 \\ \alpha_2 & \psi_2 \end{pmatrix}= \begin{pmatrix} \varphi_1\varphi_2 & 0 \\ \alpha_1\varphi_2+\psi_1\alpha_2 & \psi_1\psi_2 \end{pmatrix}. \]
There is an action $\mc{M}(H,A)\acts G$ given by 
\[ \begin{pmatrix} \varphi & 0 \\ \alpha & \psi \end{pmatrix}\cdot (h,a)=(\varphi(h),\alpha(h)+\psi(a)), \]
which corresponds to a homomorphism $\iota\colon\mc{M}(H,A)\ra\Aut(G)$. 

\begin{lem}
    If $G=H\x A$ as above, the map $\iota\colon\mc{M}(H,A)\ra\Aut(G)$ is a group isomorphism.
\end{lem}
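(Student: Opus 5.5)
The plan is to check three things in turn: that $\iota$ is a well-defined homomorphism, that it is injective, and that it is surjective. The key input for surjectivity is that the centre of $G=H\x A$ is exactly $\{1\}\x A$, since $H$ is centreless and $A$ is abelian.

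First, each matrix as in \eqref{eq:matrix} acts on $G$ by a group endomorphism. Since $A$ is central and abelian and $\alpha$ is a homomorphism,
\[ (\varphi(h_1h_2),\alpha(h_1h_2)+\psi(a_1+a_2))=(\varphi(h_1),\alpha(h_1)+\psi(a_1))\cdot(\varphi(h_2),\alpha(h_2)+\psi(a_2)). \]
Composing two such maps gives $(h,a)\mapsto(\varphi_1\varphi_2(h),\alpha_1(\varphi_2(h))+\psi_1(\alpha_2(h))+\psi_1\psi_2(a))$, which is the action of the product matrix. So the action is compatible with the multiplication, and in particular $\mc{M}(H,A)$ is a group: the identity matrix is neutral, and the inverse of a matrix is the one with entries $\varphi^{-1}$, $-\psi^{-1}\alpha\varphi^{-1}$, $\psi^{-1}$. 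Hence each matrix acts by an automorphism, and $\iota$ is a homomorphism. Injectivity is immediate. Evaluating at $(h,0)$ recovers $\varphi(h)$ and $\alpha(h)$, and evaluating at $(1,a)$ recovers $\psi(a)$. So a matrix acting trivially is the identity.

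Surjectivity is the main step. Let $\theta\in\Aut(G)$. Any automorphism preserves the centre, and $Z(G)=\{1\}\x A$ because $Z(H)=\{1\}$. So $\theta$ restricts to an automorphism $\psi$ of $A$. Now write $\theta(h,0)=(\varphi(h),\alpha(h))$ with $\varphi\colon H\to H$ and $\alpha\colon H\to A$; both are homomorphisms, since they are $\theta|_H$ followed by the two coordinate projections. Then $\theta(h,a)=\theta(h,0)\theta(1,a)=(\varphi(h),\alpha(h)+\psi(a))$, so $\theta$ has the required matrix form as soon as we know $\varphi\in\Aut(H)$.

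To prove $\varphi\in\Aut(H)$, use that $\theta$ induces an automorphism $\bar\theta$ of $G/Z(G)\cong H$, and that $\bar\theta$ is exactly $\varphi$ under this identification. Hence $\varphi$ is bijective. This uses only the centrelessness of $H$, so I expect no real obstacle.
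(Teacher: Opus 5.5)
Your proof is correct and follows essentially the paper's route: identify $Z(G)=\{1\}\x A$, read off $\psi$, $\alpha$, $\varphi$ from $\theta$ via the coordinate projections, and then show $\varphi\in\Aut(H)$. The only differences are that you also check the homomorphism and group-structure details the paper takes for granted, and that for bijectivity of $\varphi$ you note it is the automorphism of $G/Z(G)\cong H$ induced by $\theta$, whereas the paper argues directly (surjectivity from $\theta(G)\sq\varphi(H)\x A$, injectivity from $\theta(\ker\varphi)\sq A=\theta(A)$).
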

\begin{proof}
    Injectivity is clear, so we only need to show that $\iota$ is surjective. Since $H$ has trivial centre, $A$ is the centre of $G$ and it is preserved by all elements of $\Aut(G)$. Given $\chi\in\Aut(G)$, we can set $\psi:=\chi|_A\in\Aut(A)$. Denoting by $\pi_H,\pi_A$ the two factor projections of $G$, we also set $\alpha:=\pi_A\o\chi|_H\in \mathrm{H}^1(H,A)$ and $\varphi:=\pi_H\o\chi|_H$. For the moment $\varphi$ is just a homomorphism $H\ra H$, but we can certainly write $\chi$ as $\chi(h,a)=(\varphi(h),\alpha(h)+\psi(a))$. We have $\chi(G)\sq\varphi(H)\x A$ and $\chi$ is an isomorphism, so $\varphi\colon H\ra H$ must be surjective. Similarly we have $\chi(\ker\varphi)\sq A$ and $\chi(A)=A$, so injectivity of $\chi$ implies injectivity of $\varphi$. This shows that $\varphi\in\Aut(H)$, and thus $\chi$ is in the image of $\iota\colon\mc{M}(H,A)\ra\Aut(G)$. Since $\chi\in\Aut(G)$ was arbitrary, this completes the proof of the lemma.
\end{proof}

To simplify inline notation, from now on we will denote by $\mc{M}(\varphi,\psi,\alpha)$ the element of $\Aut(G)$ that is the image under $\iota$ of the matrix in \Cref{eq:matrix}. For each $n\geq 1$, we have $\chi^n=\mc{M}(\varphi^n,\psi^n,\alpha_n)$, where $\alpha_n=\sum_{j=1}^n\psi^{j-1}\alpha\varphi^{n-j}$. Thus, the growth rates of $\mc{M}(\varphi,\psi,\alpha)$ and its projection to $\Out(G)$ can be described fairly easily in terms of the growth rates of $\varphi$ and $\psi$.

For a finitely generated group $Q$, we denote by $Q_{\rm ab}$ the free part of the abelianisation of $Q$. Every automorphism $\chi\in\Aut(Q)$ naturally descends to an automorphism of $\chi_{\rm ab}\in\Aut(Q_{\rm ab})$, and each element $q\in Q$ equivariantly projects to an element $q_{\rm ab}\in Q_{\rm ab}$.

\begin{lem}\label{lem:abelian_factor}
    Let $G=H\x A$ be as above. For each automorphism $\chi=\mc{M}(\varphi,\psi,\alpha)\in\Aut(G)$ and each element $g=(h,a)\in G$, we have the following equalities in the set of abstract growth rates $\mf{G}$:
        \begin{align*}
    |\chi^n(g)|&\sim|\varphi^n(h)|+|\chi_{\rm ab}^n(g_{\rm ab})| , & \|\chi^n(g)\|&\sim\|\varphi^n(h)\|+|\chi_{\rm ab}^n(g_{\rm ab})| .
        \end{align*}
    Moreover, there exist an algebraic integer $\lambda\geq 1$ and $p\in\N$ such that $|\chi_{\rm ab}^n(g_{\rm ab})|\sim n^p\lambda^n$.
\end{lem}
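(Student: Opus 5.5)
We compute with the explicit formula $\chi^n(h,a)=(\varphi^n(h),\alpha_n(h)+\psi^n(a))$, where $\alpha_n=\sum_{j=1}^n\psi^{j-1}\alpha\varphi^{n-j}$. Take $S_H\cup S_A$ as generating set of $G=H\x A$, so that $|(h,a)|\sim|h|_H+|a|_A$. Since $A$ is central, conjugating $(h,a)$ only changes $h$, so $\|(h,a)\|\sim\|h\|_H+|a|_A$ as well. The two claimed equivalences therefore reduce to a single one:
\[ |\varphi^n(h)|+|\alpha_n(h)+\psi^n(a)| \sim |\varphi^n(h)|+|\chi_{\rm ab}^n(g_{\rm ab})| ,\]
together with its analogue with $\|\varphi^n(h)\|$ in place of $|\varphi^n(h)|$. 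The conjugacy version needs slightly more care, and we return to it below.

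Next we relate the $A$--coordinate to the abelianisation. Because $H$ is centreless, $G_{\rm ab}=H_{\rm ab}\oplus A$, and $\chi_{\rm ab}$ is the block lower-triangular matrix with blocks $\varphi_{\rm ab}$, $\bar\alpha$, $\psi$. Here $\bar\alpha\colon H_{\rm ab}\ra A$ is induced by $\alpha$, which factors through $H_{\rm ab}$ since $A$ is torsion-free abelian. It follows that
\[ \chi_{\rm ab}^n(g_{\rm ab})=\big(\varphi_{\rm ab}^n(h_{\rm ab}),\ \alpha_n(h)+\psi^n(a)\big) ,\]
because $\alpha\varphi^{n-j}(h)=\bar\alpha\varphi_{\rm ab}^{n-j}(h_{\rm ab})$. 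Hence $|\chi^n_{\rm ab}(g_{\rm ab})|\sim|\varphi^n_{\rm ab}(h_{\rm ab})|+|\alpha_n(h)+\psi^n(a)|$, and it suffices to show $|\varphi_{\rm ab}^n(h_{\rm ab})|\preceq\|\varphi^n(h)\|_H$. This holds because the projection $H\ra H_{\rm ab}$ is $1$--Lipschitz and factors through conjugacy classes, so $|x_{\rm ab}|\leq\|x\|_H\leq|x|_H$. The upper bounds follow by adding these terms, and the lower bounds follow because each summand on the right-hand side is dominated by the left-hand side.

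For the last claim, $\chi_{\rm ab}$ is an integer matrix $M\in{\rm GL}_r(\Z)$ and $v=g_{\rm ab}\in\Z^r$. We use the Jordan decomposition over $\C$: write $v$ in terms of generalised eigenvectors. The norm $|M^nv|$ is then governed by the largest modulus $\lambda$ among the eigenvalues whose generalised eigenspace components of $v$ are nonzero, together with the largest Jordan depth $p$ reached at that modulus. One obtains $|M^nv|\preceq n^p\lambda^n$ directly.

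The main obstacle is the lower bound, since several eigenvalues of the same modulus $\lambda$ with different arguments could cancel along a subsequence. To handle this, we project $v$ onto the top component, namely the level-$p$ quotient of the generalised eigenspaces of modulus $\lambda$, on which $M$ acts as a diagonalisable matrix with all eigenvalues of modulus $\lambda$. There $n^{-p}\lambda^{-n}M^n$ applied to the projection behaves like $\binom{n}{p}n^{-p}$ times a unitary-like operator acting on a nonzero vector, so its norm is bounded below by a positive constant. Lower-order terms are $o(n^p\lambda^n)$. Also $\lambda\geq1$: if $\lambda<1$, then $M^nv\to0$, contradicting $M^nv\in\Z^r\setminus\{0\}$; the case $v=0$ is handled by the convention $[1]$. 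Finally, $\lambda$ is the modulus of an algebraic integer (an eigenvalue $\mu$ of an integer matrix), and $\lambda^2=\mu\bar\mu$ is an algebraic integer, hence so is $\lambda$.
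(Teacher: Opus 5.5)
Your proposal is correct and follows essentially the same route as the paper: the $A$--coordinate of $\chi^n(g)$ agrees with that of $\chi_{\rm ab}^n(g_{\rm ab})$ because the $\alpha_n$ factor through $H_{\rm ab}$, the reverse bounds come from the projections $G\to H$ and $G\to G_{\rm ab}$ being Lipschitz, and the last claim follows from the Jordan form together with closure of algebraic integers under conjugation, products and square roots. The only addition is your sketch of the classical \Cref{lem:classical_abelian}, which the paper simply cites. In that sketch, the factor $\binom{n}{p}$ comes from the term $\binom{n}{p}D^{n-p}N^pv_\lambda$ of $M^nv$, not from $M^n$ acting on the level-$p$ quotient, where it acts only as $D$. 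Here $D,N$ are the semisimple and nilpotent parts of $M$, and $v_\lambda$ is the component of $v$ in the modulus-$\lambda$ generalised eigenspaces. Apart from this, your no-cancellation argument is right.
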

\begin{proof}
    The element $\chi^n(g)$ has coordinates $\varphi^n(h)$ and $\alpha_n(g)+\psi^n(a)$ in $H$ and $A$ respectively, where $\alpha_n$ is the homomorphism described above. We have $G_{\rm ab}=H_{\rm ab}\oplus A$, and the homomorphisms $\alpha_n\colon H\ra A$ factor through $H_{\rm ab}$. Thus, the coordinate of $\chi_{\rm ab}^n(g_{\rm ab})$ along $A$ is identical to that of $\chi^n(g)$. We obtain the inequality
    \begin{align*}
        |\chi^n(g)|\sim |\varphi^n(h)|+|\alpha_n(g)+\psi^n(a)|\preceq |\varphi^n(h)|+|\chi_{\rm ab}^n(g_{\rm ab})|,
    \end{align*}
    as well as the analogous inequality for $\|\chi^n(g)\|$. The reverse inequalities are immediate from the fact that the projections $G\ra H$ and $G\ra G_{\rm ab}$ are Lipschitz.

    Finally, we have $|\chi_{\rm ab}(g_{\rm ab})|\sim n^p\lambda^n$ for an algebraic integer $\lambda\geq 1$ and some $p\in\N$ because $G_{\rm ab}$ is free abelian and we can invoke the classical \Cref{lem:classical_abelian} below. Algebraic integers are closed under taking complex conjugates, products, square roots and thus also under taking moduli.
\end{proof}

\begin{lem}\label{lem:classical_abelian}
    Let $k\geq 1$. For each automorphism $\varphi\in\Aut(\Z^k)={\rm GL}_k(\Z)$ and each element $a\in\Z^k\setminus\{0\}$, there exist $p\in\N$ and $\mu\in\C$ with $|\mu|\geq 1$ such that
    \begin{align*}
        |\varphi^n(a)|=\|\varphi^n(a)\|\sim n^p|\mu|^n.
    \end{align*}
    Moreover, $\varphi\in{\rm GL}_k(\Z)\leq{\rm GL}_k(\C)$ has a $(p+1)$--dimensional Jordan block with eigenvalue $\mu$.
\end{lem}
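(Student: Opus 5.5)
The plan is to pass to $\C^k$ and use the Jordan normal form of $\varphi$. In $\Z^k$ the word length with respect to the standard basis is the $\ell^1$ norm. Since $\Z^k$ is abelian, conjugacy length equals word length, so $|\varphi^n(a)|=\|\varphi^n(a)\|$ holds trivially. Any two norms on $\C^k$ are equivalent, so it suffices to estimate $\|\varphi^n(a)\|$ for a convenient norm on $\C^k$, as a vector in $\C^k$. Choose $P\in{\rm GL}_k(\C)$ with $P^{-1}\varphi P=J$ in Jordan form, and take the norm $v\mapsto\|P^{-1}v\|_\infty$. Writing $b:=P^{-1}a=\sum_i b_i$ with $b_i$ in the $i$-th Jordan block subspace $V_i$ (eigenvalue $\mu_i$, size $d_i$), the problem reduces to $\max_i\|J_i^nb_i\|_\infty$.

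Next, compute within a single block. If $J_i=\mu_i I+N$ with $N$ nilpotent, then $J_i^n=\sum_{j<d_i}\binom{n}{j}\mu_i^{n-j}N^j$. Let $q_i$ be the largest $j$ with $N^jb_i\neq0$, for $b_i\neq0$; note $\mu_i\neq0$ since $\varphi$ is invertible. The coordinate of $J_i^nb_i$ at the bottom of the support of $N^{q_i}b_i$ receives contributions only from the term $j=q_i$, so that coordinate equals $\binom{n}{q_i}\mu_i^{n-q_i}c$ with $c\neq0$. This gives $\|J_i^nb_i\|\geq c'n^{q_i}|\mu_i|^n$ for large $n$, and the upper bound $\preceq n^{q_i}|\mu_i|^n$ is immediate. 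Hence $\|\varphi^n(a)\|$ is comparable to $\max_i n^{q_i}|\mu_i|^n$ over the blocks with $b_i\neq0$. This maximum is attained, up to constants, by the lexicographically largest pair $(|\mu|,q)=(|\mu_{i_0}|,q_{i_0})$. Set $\mu=\mu_{i_0}$ and $p=q_{i_0}$.

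Then establish the remaining claims. Since $N^pb_{i_0}\neq0$, the block $J_{i_0}$ has size at least $p+1$. The statement asks for a block of size exactly $p+1$, so I would refine the decomposition to arrange this. The cyclic subspace generated by $b_{i_0}$ under $J_{i_0}$ has dimension $p+1$ and is $\varphi$-invariant. Re-choosing the Jordan basis, I would decompose the generalized eigenspace so that $b_{i_0}$'s component generates a full Jordan block of size $p+1$. This is possible because the cyclic submodule generated by an element of maximal nilpotency height among the components can be taken as a direct summand, by the standard Jordan basis construction. The choice of $(|\mu|,p)$ is basis-independent, being the growth exponent, so this is harmless.

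Finally, $|\mu|\geq1$. If all $|\mu_i|<1$ on the support of $b$, then $\varphi^n(a)\to0$. But $\varphi^n(a)$ is a nonzero vector in $\Z^k$, which has norm bounded below, a contradiction. The main subtlety is the lower bound in the block computation: cancellation between different blocks with the same $|\mu|$ and $p$ is impossible only because we use the sup norm in Jordan coordinates, where the blocks occupy disjoint coordinates. Choosing that norm is therefore the key step.
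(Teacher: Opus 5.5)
\textbf{The gap is in the refinement paragraph for the ``Moreover'' clause.} Your growth estimate is essentially right, but the argument for a block of size \emph{exactly} $p+1$ cannot work, and no argument can.

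Jordan block sizes are similarity invariants of $\varphi$. The number of blocks of size $\geq j$ with eigenvalue $\mu$ is $\dim\ker(\varphi-\mu I)^{j}-\dim\ker(\varphi-\mu I)^{j-1}$. So re-choosing the Jordan basis cannot produce a block of size $p+1$ if $\varphi$ has none.

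The module fact you invoke needs the element to have maximal height in the whole $\mu$--primary component, or at least to generate a pure cyclic submodule. Having maximal height among the components of $b$ is not enough.

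Concretely, take $\varphi=\begin{pmatrix}1&1\\0&1\end{pmatrix}\in{\rm GL}_2(\Z)$ and $a=e_1$. Then $\varphi^n(a)=a$, which forces $p=0$ and $|\mu|=1$, hence $\mu=1$. Yet the only Jordan block of $\varphi$ is $2$--dimensional, and $\C e_1$ has no invariant complement.

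So the statement is false under the ``exactly $(p+1)$--dimensional'' reading. The clause must be read as ``$\varphi$ has a Jordan block of size at least $p+1$ with eigenvalue $\mu$''. Equivalently, $\varphi$ acts on the $(p+1)$--dimensional invariant cyclic subspace generated by $Pb_{i_0}$ as a single Jordan block with eigenvalue $\mu$. You had already proved exactly this just before the refinement, so drop the refinement and stop there. This weaker form is also all the paper uses: \Cref{lem:abelian_factor} only needs $\mu$ to be an eigenvalue of an integer matrix, so that $|\mu|$ is an algebraic integer.

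\textbf{A smaller slip in the block computation.} The bottom coordinate of $J_i^nb_i$, the one supporting $N^{q_i}b_i$, generally receives contributions from every $j\leq q_i$, not only from $j=q_i$. What is true is that this coordinate equals $\mu_i^{n}$ times a polynomial in $n$ of exact degree $q_i$. Its leading coefficient comes only from the $j=q_i$ term and is nonzero, which gives the lower bound $\succeq n^{q_i}|\mu_i|^n$ for large $n$. Small $n$ are absorbed into the constant because $\varphi^n(a)\neq 0$.

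With these two corrections, your argument is complete. The sup norm in Jordan coordinates, the lexicographic choice of $(|\mu|,p)$, and the lattice argument for $|\mu|\geq 1$ are all fine. The paper gives no proof of this lemma (it is quoted as classical), so there is no route to compare against.
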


For automorphisms of free and surface groups, the maximal subgroups all of whose elements do not grow at top speed under some automorphism are particularly well-behaved. In particular, they are always finitely generated. The next example shows that, when we introduce an abelian direct factor, this property no longer holds.

\begin{ex}\label{ex:positive_ifg}
    Consider the product $G=F_k\x\Z^k$ for $k\geq 3$. There exists an automorphism $\chi\in\Aut(G)$ that has exactly three growth rates --- namely $[1]$, $[\lambda^n]$ and $[n\lambda^n]$ for some $\lambda>1$ --- where $[1]$ is realised at the identity, $[\lambda^n]$ is realised by nontrivial elements of a subgroup $N\x\Z^k\lhd G$, where $N\lhd F_n$ is infinitely generated, and $[n\lambda^n]$ is realised by all remaining elements of $G$.

    The construction of such automorphisms $\chi$ is fairly general. We start with a positive, fully irreducible
    automorphism\footnote{
    A simple such example in $F_3=\langle a,b,c\rangle$ is the automorphism $\varphi$ mapping $a\mapsto ab$, $b\mapsto bc$, $c\mapsto cab$. 
    } 
    $\varphi\in\Aut(F_k)$. Let $\varphi_{\rm ab}$ be the induced automorphism of the abelianisation $(F_k)_{\rm ab}\cong\Z^k$ and let $\alpha\colon F_k\ra (F_k)_{\rm ab}$ be the quotient projection. Set $G:=F_k\x (F_k)_{\rm ab}\cong F_k\x\Z^k$ and $\chi:=\mc{M}(\varphi,\varphi_{\rm ab},\alpha)$, in the above notation. We now prove the above statements.

    Let $\lambda$ be the Perron--Frobenius eigenvalue of $\varphi$. By the existence of train-track maps, we have $|\varphi^n(h)|\sim\|\varphi^n(h)\|\sim\lambda^n$ for all nontrivial elements $h\in F_k$ \cite{BH92}. We also have $|\varphi_{\rm ab}^n(\alpha(h))|\leq |\varphi^n(h)|$ for all $h\in F_k$, in terms of the standard generating sets of $F_k$ and $(F_k)_{\rm ab}$. This inequality is an equality whenever $h$ is a positive element of $F_k$, 
    since $\varphi$ is a positive automorphism.
    
    Note that $\chi^n=\mc{M}(\varphi^n,\varphi_{\rm ab}^n,\alpha_n)$ for each $n\geq 1$, where $\alpha_n=n\cdot (\alpha\o\varphi^{n-1})=n\cdot (\varphi_{\rm ab}^{n-1}\o\alpha)$. Thus,
    \[ \chi^n(h,a)=\big(\varphi^n(h),\ \varphi_{\rm ab}^n(a)+n\cdot(\varphi_{\rm ab}^{n-1}\alpha)(h)\big) .\]
    It follows that $\lambda^n\preceq|\chi^n(g)| \preceq n\lambda^n$ for every nontrivial element $g\in G$. In addition, we have $|\chi^n(g)|\sim n\lambda^n$ if $g=(h,\alpha(h))\in F_k\x(F_k)_{\rm ab}$ for a positive element $h\in F_k$. The same statements hold for conjugacy-length growth.

    Now, let $\beta\colon F_k\ra\R^k$ be the limit for $n\ra+\infty$ of the homomorphisms 
    \[ \frac{1}{n\lambda^n}\alpha_n=\lambda^{-n}(\varphi_{\rm ab}^n\o\alpha)\colon F_k\ra(F_k)_{\rm ab}\cong\Z^k\hookrightarrow\R^k .\]
    That this limit exists follows from the fact that a power of $\varphi_{\rm ab}\in{\rm GL}_k(\Z)$ is represented by a matrix with positive entries and the Perron--Frobenius eigenvalue of $\varphi_{\rm ab}$ is $\lambda$, so all other eigenvalues of $\varphi_{\rm ab}$ have modulus $<\lambda$. 
    The homomorphism $\beta$ is nontrivial, as it does not vanish on positive elements of $F_k$, by the above discussion.
    
    Any nontrivial element $g$ of $\ker\beta\x\Z^k$ clearly has $|\chi^n(g)|\sim \|\chi^n(g)\|\sim\lambda^n$, 
    while all elements of $G$ outside $\ker\beta\x\Z^k$ have $[n\lambda^n]$ as their growth rate, by construction.
    This proves all our claims.
\end{ex}

\subsection{The general case}\label{subsec:direct_products}

Consider a finitely generated group $G=G_1\x\dots\x G_k\x A$, where $k\geq 0$ and $A$ is free abelian. Assume that each $G_i$ is directly indecomposable and has trivial centre.

\begin{lem}\label{lem:aut_preserve_factors}
    The automorphism group $\Aut(G)$ permutes the subgroups $\langle G_i,A\rangle$ for $1\leq i\leq k$. 
\end{lem}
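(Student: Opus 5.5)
The plan is to pass to the quotient by the centre and apply a uniqueness statement for direct decompositions of centreless groups. The centre of $G$ is $Z(G)=A$, because each $G_i$ has trivial centre. Since $A$ is characteristic, every $\chi\in\Aut(G)$ descends to an automorphism $\bar\chi$ of
\[ \bar G:=G/A\cong G_1\x\dots\x G_k . \]
Each subgroup $\langle G_i,A\rangle=G_i\x A$ is exactly the preimage of the factor $G_i$ under the quotient map $G\ra\bar G$. It therefore suffices to show that $\Aut(\bar G)$ permutes the factors $G_1,\dots,G_k$ of $\bar G$.

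The key step is the classical fact that a centreless group has at most one decomposition as a direct product of directly indecomposable factors, up to reordering. Here is how I would argue it directly. Let $\theta\in\Aut(\bar G)$ and set $H_j:=\theta(G_j)$, which gives a second decomposition $\bar G=H_1\x\dots\x H_k$ into centreless indecomposable factors. Fix $i$ and write $\pi_j$ for the projection onto $H_j$. Since $G_i$ is normal in $\bar G$, each $\pi_j(G_i)$ is normal in $H_j$.

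Next I would show that $G_i=\prod_j (G_i\cap H_j)$. Take $g\in G_i$ with components $g=(h_1,\dots,h_k)$. For any $x\in H_j$, the commutator $[g,x]=[h_j,x]$ lies in both $G_i$ and $H_j$. The quotient $\pi_j(G_i)/(G_i\cap H_j)$ is then central in $H_j/(G_i\cap H_j)$. To deduce $\pi_j(G_i)=G_i\cap H_j$, I would use centrelessness: any $y\in G_i\cap H_j$ is centralised by $\prod_{l\neq i}G_l$, and centralisers in $\bar G$ are controlled by the product structure.

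I expect the cleanest route is the standard Remak–Krull–Schmidt argument for centreless groups. One shows that $\pi_j(G_i)$ and $G_i\cap H_j$ coincide: an element of $\pi_j(G_i)$ commutes, modulo $G_i\cap H_j$, with everything in $H_j$, and it commutes exactly with the other factors. A short computation then places the relevant commutators in $Z(\bar G)=1$. Granting $G_i=\prod_j(G_i\cap H_j)$, indecomposability of $G_i$ forces $G_i=G_i\cap H_{j}$ for a single $j$, so $G_i\le H_j$. The symmetric argument gives $H_j\le G_{i'}$ for some $i'$. Since distinct factors intersect trivially, $i'=i$ and $G_i=H_j$.

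The main obstacle is this commutator step. It is the one place where triviality of the centre is used essentially, and it has to work without any finiteness or chain conditions on the $G_i$; Krull–Schmidt in general needs such conditions. The point is that $Z=1$ makes the normal-subgroup argument exact, so I would either carry out that computation carefully or cite the known result that centreless groups have unique indecomposable direct decompositions. Lifting back to $G$ via preimages under $G\ra\bar G$ completes the proof.
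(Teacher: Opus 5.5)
Your proof is correct in outline but differs from the paper's. The step you flag as the main obstacle is only gestured at, and your first attempt at it does not close: knowing that $\pi_j(G_i)/(G_i\cap H_j)$ is central in $H_j/(G_i\cap H_j)$ gives nothing, because you have no reason to think that quotient is centreless.

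The quotient whose centrelessness you need is $\bar G/G_i\cong\prod_{l\neq i}G_l$. With $\pi_j$ your projection onto $H_j$, take $g\in G_i$ and $x\in\bar G$. The element $\pi_j(g)$ commutes with $\prod_{l\neq j}H_l$, and $g$ differs from $\pi_j(g)$ by an element commuting with $H_j$. Hence $[\pi_j(g),x]=[\pi_j(g),\pi_j(x)]=[g,\pi_j(x)]\in G_i$, using normality of $G_i$. So $\pi_j(g)$ is central modulo $G_i$, and therefore lies in $G_i$. This gives $\pi_j(G_i)=G_i\cap H_j$, and $G_i=\prod_j(G_i\cap H_j)$ as an internal direct product. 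The symmetric step uses, in the same way, that $\bar G/H_j\cong\theta\bigl(\prod_{l\neq j}G_l\bigr)$ is centreless. Citing the uniqueness of indecomposable decompositions of centreless groups would also be legitimate, but the direct argument is this short. After this, your indecomposability argument and the lift along $G\to G/A$ are fine.

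The paper avoids this step by projecting in the other direction and never intersecting. It stays in $G$. With $\pi_i\colon G\to G_i$ the factor projection and $\varphi\in\Aut(G)$, the subgroups $\pi_i\varphi(G_1),\dots,\pi_i\varphi(G_k)$ pairwise commute and generate $G_i$, because $\varphi(A)=A$ and $\pi_i(A)=1$. An element of one of them that lies in the subgroup generated by the others is therefore central in $G_i$, hence trivial. So $G_i$ is their internal direct product, and indecomposability gives $G_i=\pi_i\varphi(G_{\sigma(i)})$ with all other projections trivial. This yields $\langle\varphi(G_j),A\rangle=\langle G_{\sigma^{-1}(j)},A\rangle$.

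The paper's route uses only the centrelessness of the single factor $G_i$. It needs no quotient by $A$ and no symmetric second pass, at the cost of a little bookkeeping to see that $\sigma$ is a permutation. Your route proves the stronger classical fact that, in a centreless group, every direct factor splits along every direct decomposition. It also gives the equality $\theta(G_j)=G_i$ in $G/A$ directly. That extra generality is what costs you the commutator step above.
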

\begin{proof}
    Consider some $\varphi\in\Aut(G)$. Since $A$ is the centre of $G$, we have $\varphi(A)=A$. Thus, $G$ is generated by the subgroups $\varphi(G_1),\dots,\varphi(G_k),A$. Denoting by $\pi_1\colon G\ra G_1$ the factor projection, it follows that $G_1$ is generated by the pairwise-commuting subgroups $\pi_1\varphi(G_1),\dots,\pi_1\varphi(G_k)$. Since $G_1$ has trivial centre, each subgroup $\pi_1\varphi(G_i)$ has trivial intersection with the subgroup generated by the other $k-1$ subgroups $\pi_1\varphi(G_j)$. It follows that 
    \[ G_1=\pi_1\varphi(G_1)\x\dots\x\pi_1\varphi(G_k).\]
    Since $G_1$ is directly indecomposable, we must have $G_1=\pi_1\varphi(G_i)$ for some index $i$ and $\pi_1\varphi(G_j)=\{1\}$ for all $j\neq i$. Repeating the argument for all factors of $G$, we obtain a permutation $\s\in{\rm Sym}(k)$ such that $G_i=\pi_i\varphi(G_{\s(i)})$ for all $1\leq i\leq k$ and $\pi_i\varphi(G_j)=\{1\}$ for all $j\neq\s(i)$. This shows that $\langle\varphi(G_j),A\rangle = \langle G_{\s^{-1}(j)},A\rangle$ for all $1\leq j\leq k$, completing the proof.
\end{proof}

Let $\Aut^0(G)\leq\Aut(G)$ be the finite-index subgroup preserving each subgroup $\langle G_i,A\rangle$. When $A=\{1\}$, the elements of $\Aut^0(G)$ are simply products $\varphi_1\x\dots\x\varphi_k$ with $\varphi_i\in\Aut(G_i)$. We can now apply \Cref{lem:abelian_factor} to immediately deduce the following.

\begin{cor}\label{cor:growth_direct_product}
    Let $G=G_1\x\dots\x G_k\x A$ be as above. For each $\varphi\in\Aut^0(G)$, there exist automorphisms $\varphi_i\in\Aut(G_i)$ such that, for each element $g=(g_1,\dots,g_k,a)\in G$, we have:
    \begin{align*}
            |\varphi^n(g)|&\sim\sum_{i=1}^k|\varphi_i^n(g_i)|+|\varphi_{\rm ab}^n(g_{\rm ab})| , & \|\varphi^n(g)\|&\sim\sum_{i=1}^k\|\varphi_i^n(g_i)\|+|\varphi_{\rm ab}^n(g_{\rm ab})| .
    \end{align*}
    Moreover, for each $\varphi$ and $g$, there exist an algebraic integer $\lambda\geq 1$ and $p\in\N$ with $|\varphi_{\rm ab}^n(g_{\rm ab})|\sim n^p\lambda^n$.
\end{cor}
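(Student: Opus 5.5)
We set $H:=G_1\x\dots\x G_k$, which has trivial centre because each $G_i$ does, so that $G=H\x A$ and we are in the setting of \Cref{sub:abelian_factors}. By the isomorphism $\iota\colon\mc{M}(H,A)\ra\Aut(G)$, any $\varphi\in\Aut^0(G)$ can be written as $\varphi=\mc{M}(\varphi_H,\psi,\alpha)$ with $\varphi_H\in\Aut(H)$. Applying \Cref{lem:abelian_factor} to $g=(h,a)$ with $h=(g_1,\dots,g_k)$ gives
\[ |\varphi^n(g)|\sim|\varphi_H^n(h)|+|\varphi_{\rm ab}^n(g_{\rm ab})|, \qquad \|\varphi^n(g)\|\sim\|\varphi_H^n(h)\|+|\varphi_{\rm ab}^n(g_{\rm ab})|, \]
and it also gives the final claim about $n^p\lambda^n$ with $\lambda$ an algebraic integer. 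So the remaining work is to split $\varphi_H$ as a product of automorphisms of the factors $G_i$.

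For this, note that $\varphi_H=\pi_H\o\varphi|_H$. Since $\varphi$ preserves $\langle G_i,A\rangle=G_i\x A$, projecting to $H$ shows that $\varphi_H(G_i)\sq G_i$ for each $i$. The same reasoning applied to $\varphi^{-1}\in\Aut^0(G)$ shows that $\varphi_H^{-1}=(\varphi^{-1})_H$ also preserves each $G_i$. The identity $\varphi_H^{-1}=(\varphi^{-1})_H$ follows from the matrix multiplication formula, whose top-left entry is multiplicative. Hence $\varphi_i:=\varphi_H|_{G_i}\in\Aut(G_i)$ and $\varphi_H=\varphi_1\x\dots\x\varphi_k$. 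This is the one step needing care, namely checking that $\varphi_H$ really is an automorphism preserving the factors, and it is handled by the argument above.

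Finally, in the direct product $H$ with generating set the union of generating sets of the $G_i$, we have $|(h_1,\dots,h_k)|_H=\sum_i|h_i|_{G_i}$. Conjugation can be performed factorwise, so also $\|(h_1,\dots,h_k)\|_H=\sum_i\|h_i\|_{G_i}$. Therefore $|\varphi_H^n(h)|=\sum_i|\varphi_i^n(g_i)|$, and the same holds for conjugacy length. Substituting these identities into the equivalences above yields the corollary.
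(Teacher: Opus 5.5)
Your proposal is correct and follows the paper's route: the paper obtains the corollary directly from \Cref{lem:abelian_factor} applied to $H=G_1\x\dots\x G_k$, using that the top-left entry of any element of $\Aut^0(G)$ splits as $\varphi_1\x\dots\x\varphi_k$. You supply the details the paper leaves implicit, namely why $\varphi_H$ preserves each $G_i$ and why word and conjugacy length are additive over the direct factors; the case $A=\{1\}$, which you don't mention, is also harmless.
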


\section{Invariant graphs of groups}\label{subsec:invariant_GOGs}

Consider a finitely generated group $G$ and an outer automorphism $\phi\in\Out(G)$ that preserves a splitting of $G$ as a graph of groups. The next lemma is a classical exercise in Bass--Serre theory.

\begin{lem}\label{lem:automorphisms_fixing_BS}
    Consider a group $G$ with a one-edge splitting as
    \[ G=A\ast_C B \qquad\text{ or }\qquad G=A\ast_{C,\g}=\langle A,t\mid t^{-1}ct=\g(c),\ \forall c\in C\rangle .\]
    If, for some $\phi\in\Out(G)$, the Bass--Serre tree $G\acts T$ extends to an action $G\rtimes_{\phi}\Z\acts T$ without inversions, then $\phi$ is represented by an automorphism $\varphi\in\Aut(G)$ of the following form:
    \begin{enumerate}
        \item In the amalgamated product case, we have $\varphi(A)=A$ and $\varphi(B)=B$.
        \item In the HNN case, we have $\varphi(A)=A$, $\varphi(C)=C$ and $\varphi(t)=ta$ for some $a\in A$. 
    \end{enumerate}
\end{lem}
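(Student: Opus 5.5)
The plan is to pick any representative $\varphi_0\in\Aut(G)$ of $\phi$ and to turn the action of $G\rtimes_\phi\Z$ into a $\varphi_0$--twisted isometry of $T$. Write $\tau$ for the element of $G\rtimes_{\phi}\Z$ realising $\varphi_0$, so $\tau g\tau^{-1}=\varphi_0(g)$, and let $f\colon T\ra T$ be the isometry by which $\tau$ acts. Then $f(g\cdot x)=\varphi_0(g)\cdot f(x)$ for all $g\in G$ and $x\in T$. As a consequence, $f$ carries vertex stabilisers to vertex stabilisers and edge stabilisers to edge stabilisers: $\varphi_0(G_x)=G_{f(x)}$. Because the extended action has no inversions and $G$ already acts on $T$ with quotient a single edge, $f$ induces a graph automorphism of the quotient $T/G$ that fixes the edge and does not flip it. Every adjustment below is made by composing $\varphi_0$ with an inner automorphism $\mathrm{ad}_h$. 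This replaces $f$ by $h f$, so the twisted-equivariance relation is preserved.

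In the amalgamated case, let $e=[v_A,v_B]$ be the standard edge, with $G_{v_A}=A$ and $G_{v_B}=B$. The edge $f(e)$ lies in the $G$--orbit of $e$, so for some $h\in G$ the map $hf$ fixes $e$. No inversions means $hf$ cannot swap the endpoints of $e$. (If $A$ and $B$ happen to lie in the same $G$--orbit, this already follows from bipartiteness of $T$; orbits of vertices of type $A$ and type $B$ are distinct in the tree of a one-edge amalgam.) Therefore $hf$ fixes both $v_A$ and $v_B$, and $\varphi:=\mathrm{ad}_h\o\varphi_0$ satisfies $\varphi(A)=G_{v_A}=A$ and $\varphi(B)=B$.

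In the HNN case, take the standard edge $e=[v,tv]$ with $G_v=A$ and $G_e=C$. Here $e$ is oriented from $v$ to $tv$; with the convention $t^{-1}ct=\gamma(c)$ one checks that $G_e=C$ and $G_{tv}=tAt^{-1}$, and $C\le tAt^{-1}$ because $c=t\gamma(c)t^{-1}$. As before, choose $h$ so that $hf(e)=e$. No inversions forces $hf$ to fix $v$ and $tv$. Set $\varphi:=\mathrm{ad}_h\o\varphi_0$ and $F:=hf$. Then $\varphi(A)=A$ and $\varphi(C)=G_{F(e)}=C$. For $t$, compute
\[
\varphi(t)\cdot v=\varphi(t)\cdot F(v)=F(t\cdot v)=t\cdot v,
\]
so $t^{-1}\varphi(t)\in G_v=A$, that is, $\varphi(t)=ta$ with $a\in A$.

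The only delicate point is the orientation and no-inversion step. One has to make sure that $f$, after correcting by $h$, fixes the edge pointwise rather than reversing it, and that the stabiliser conventions match the presentation. I would also state explicitly that "without inversions" refers to the action of the whole group $G\rtimes_\phi\Z$. That hypothesis is precisely what excludes, for example, the swap of $A$ and $B$ when $A\cong B$, or an automorphism sending $t\mapsto t^{-1}$. Beyond this, the argument is standard Bass--Serre bookkeeping.
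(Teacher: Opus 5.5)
Your proof is correct. The paper gives no proof of this lemma and only calls it a classical exercise in Bass--Serre theory; your argument is the standard way to do that exercise. You take the $\varphi_0$--twisted-equivariant isometry $f$ given by the extra generator and correct it by some $h\in G$ so that $hf$ stabilises the standard edge. The absence of inversions in all of $G\rtimes_{\phi}\Z$ then forces $hf$ to fix both endpoints. Finally you read off $\varphi(A)=A$, $\varphi(B)=B$ (respectively $\varphi(C)=C$ and $t^{-1}\varphi(t)\in G_v=A$) from $\varphi(G_x)=G_{hf(x)}$.

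One small correction: delete the parenthetical about bipartiteness in the amalgam case, because it is false as stated. Bipartiteness of $T$, and the fact that $A$--type and $B$--type vertices lie in different $G$--orbits, do not stop $hf$ from swapping the two types. The map $hf$ is only twisted-equivariant; the factor swap on $A\ast A$ is an example where it does swap them. Only the no-inversion hypothesis on $G\rtimes_{\phi}\Z$ rules this out, as your main sentence and your final paragraph correctly say.
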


The next proposition is the main result of this section. The main takeaway is that, in a $\phi$--invariant graph of groups, growth on the whole group cannot be strictly faster than growth on the vertex groups, provided that the latter is sufficiently well-behaved and at least exponential. This can often be used to describe the top growth rate of $\phi$ on the whole group. At the same time, understanding the behaviour of \emph{all} growth rates of $\phi$ is a much more delicate problem, which we do not address in this short article.

A \emph{splitting} of a group $G$ is a minimal action on a simplicial tree $G\acts T$ that has at least one edge, and no edge-inversions. The splitting is \emph{$\phi$--invariant} for some $\phi\in\Out(G)$ if the $G$--action on $T$ extends to a $G\rtimes_{\phi}\Z$--action on $T$. 
Recall that the growth rates $\overline{\mc{O}}_{\rm top}(\cdot)$ and $\overline{\mf{o}}_{\rm top}(\cdot)$ were defined in \Cref{sub:general_growth}, and docile automorphisms were introduced in \Cref{defn:tame_aut}. Given $\phi\in\Out(G)$ and a subgroup $H\leq G$ with $\phi$--invariant $G$--conjugacy class, the restrictions $\phi|_H\in\Out(H)$ were discussed in \Cref{rmk:restriction_new}.

\begin{prop}\label{prop:growth_GOGs}
    Consider a finitely generated group $G$ with a $\phi$--invariant splitting $G\acts T$ for some $\phi\in\Out(G)$.
    Suppose that $\phi$ descends to the identity on the finite graph $T/G$. Additionally, suppose that there exist integers $p,q\in\N$ and a finite subset $\Lambda\sq\R_{>1}$ such that, for each vertex $v\in T$, the $G$--stabiliser of $v$ (denoted $V$) satisfies {\bf at least one} of the following conditions:
    \begin{enumerate}
        \item[(a)] for every representative $\varphi\in\Aut(G)$ of $\phi$ with $\varphi(V)=V$ and every $g\in V$, we have $|\varphi^n(g)|\preceq n^q$ (computing word lengths with respect to a finite generating set of $G$);
        \item[(b)] $V$ is finitely generated, conjugacy-undistorted in $G$ and, for every representative $\varphi\in\Aut(G)$ of $\phi$ with $\varphi(V)=V$, the restriction $\varphi|_V\in\Aut(V)$ is $(\lambda,p)$--docile for some $\lambda\in\Lambda$.
    \end{enumerate}
    Then, the following properties hold.
    \begin{enumerate}
        \item If there are no type~(b) vertex groups, then we have $\overline{\mc{O}}_{\rm top}(G,\varphi)\preceq n^{q+2}$ for all representatives $\varphi\in\Aut(G)$ of $\phi$.
        \item If there is at least one type~(b) vertex group, then $\phi\in\Out(G)$ is $(\mu,p)$--docile for $\mu=\max\Lambda$. Moreover, we can choose finitely many type~(b) vertex groups $V_1,\dots,V_k\leq G$ and restrictions $\phi_i\in\Out(V_i)$ of the outer automorphism $\phi$ such that $\overline{\mf{o}}_{\rm top}(\phi)\sim\sum_{i=1}^k\overline{\mf{o}}_{\rm top}(\phi_i)$.
    \end{enumerate}
\end{prop}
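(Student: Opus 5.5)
We will realise $\phi$ by a representative adapted to a fundamental domain of $T$ and then read growth off normal forms. Since $\phi$ descends to the identity on $T/G$, choose a maximal subtree of a lift of $T/G$: vertices $v_1,\dots,v_m$ with stabilisers $V_1,\dots,V_m$, edges with stabilisers $C_e$, and stable letters $t_e$ for edges outside the maximal tree. Applying \Cref{lem:automorphisms_fixing_BS} edge by edge (equivalently, choosing an isometry $\Phi$ of $T$ representing $\phi$ that fixes the chosen fundamental domain up to the twisting allowed by the HNN case), we obtain a representative $\varphi$ with $\varphi(V_j)=V_j$ for all $j$ in the maximal tree and $\varphi(t_e)=t_e a_e$ with $a_e$ in a vertex group. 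Then $G$ is generated by finite generating sets of the $V_j$ (for type~(a) vertices we only use finitely many elements of $V_j$ needed to generate $G$, together with the fact that $G$ is finitely generated) and the $t_e$. Iterating, $\varphi^n(t_e)=t_e\,a_e\varphi(a_e)\cdots\varphi^{n-1}(a_e)$, so $|\varphi^n(t_e)|\le 1+\sum_{i<n}|\varphi^i(a_e)|$.

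For part (1), all vertex groups are type~(a), so $|\varphi^i(a_e)|\preceq i^q$ and each generator lying in a vertex group grows $\preceq n^q$; hence $\sigma_S(\varphi^n)\preceq n^{q+1}$ for this particular representative. By \Cref{rmk:sub-polynomial_independent_of_representative}, every other representative satisfies $\overline{\mc{O}}_{\rm top}\preceq n^{q+2}$. (One must check that hypothesis (a), stated for every representative preserving $V$, applies to our $\varphi$; it does by construction.)

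For part (2), the same computation gives the upper bound: generators in type~(b) vertex groups grow like $\overline{\mc{O}}_{\rm top}(\varphi|_{V_j})$, which is $(\lambda_j,p)$--tame; generators in type~(a) groups grow polynomially; and $|\varphi^n(t_e)|$ is bounded by a partial sum of tame or polynomial sequences, which by \Cref{rmk:tame_implies_sum-stable} is $\preceq$ the maximum of the tame rates (polynomial terms being absorbed since $\lambda>1$). Thus $\overline{\mc{O}}_{\rm top}(\varphi)\preceq\sum_{j\in(b)}\overline{\mc{O}}_{\rm top}(\varphi|_{V_j})$. For the lower bound, conjugacy-undistortion of $V_j$ and soundness of $\varphi|_{V_j}$ give $\overline{\mc{O}}_{\rm top}(\varphi|_{V_j})\sim\overline{\mf{o}}_{\rm top}(\phi_j)\preceq\overline{\mf{o}}_{\rm top}(\phi)$, where the last step needs care: $\tau_S(\phi^n)$ for $G$ is a minimum over a single conjugator $x$, and we must show that conjugating all generators of $G$ by $x$ still forces conjugate-length growth on each $V_j$'s generators. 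This follows since conjugacy length in $V_j$ of each element is bounded by its conjugacy length in $G$, and $\max_s\|\phi^n(s)\|\le\tau_S(\phi^n)$, so $\max_{s\in S_j}\|\phi_j^n(s)\|_{V_j}\preceq\tau_S(\phi^n)$; and for a docile automorphism, $\overline{\mf{o}}_{\rm top}(\phi_j)$ is comparable to $\max_{s\in S_j}\|\phi_j^n(s)\|$ up to replacing $S_j$ by $S_j$ plus pairwise products — this comparison (maximum of individual conjugacy lengths versus simultaneous conjugation) is the step I expect to be the main obstacle, and I would first try handling it by enlarging $S$ with products of generators and using a standard argument that small conjugacy length of $s,s',ss'$ forces a common short conjugator in $V_j$ up to bounded error, or else by working with $\overline{\mc{O}}_{\rm top}$ via soundness directly.

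Combining, $\overline{\mf{o}}_{\rm top}(\phi)\preceq\overline{\mc{O}}_{\rm top}(\varphi)\preceq\sum_j\overline{\mf{o}}_{\rm top}(\phi_j)\preceq\overline{\mf{o}}_{\rm top}(\phi)$, giving soundness of $\varphi$ and the sum formula. A finite sum of $(\lambda_j,p)$--tame rates is $(\mu,p)$--tame with $\mu=\max\Lambda$ after discarding those with $\lambda_j<\mu$ (dominated) and adding the weakly increasing $a_n$'s of the rest. Docility of all representatives then follows from \Cref{rmk:tameness_independent_of_representative}.
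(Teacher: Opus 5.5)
Your overall strategy is the paper's: normal forms, absorbing partial products via \Cref{rmk:tame_implies_sum-stable}, and a lower bound from conjugacy-undistortion plus soundness. Two steps, however, do not go through as written.

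\textbf{Simultaneous normalisation is false.} \Cref{lem:automorphisms_fixing_BS} normalises a representative only for a one-edge splitting. Take a vertex with stabiliser $V$ and two edges $e,e'$ of your maximal tree. You can make $\Phi$ fix $e$, but then $\Phi(e')=ge'$ with $g\in V$ determined only up to $C_egC_{e'}$, and in general $g\notin C_eC_{e'}$.

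A concrete example: let $G=\langle x\rangle\ast\langle z,w\mid [z,w]\rangle\ast\langle y\rangle$, split as $\langle x,w\rangle\ast_{\langle w\rangle}\langle z,w\rangle\ast_{\langle w\rangle}\langle w,y\rangle$, and let $\varphi\colon x\mapsto x,\ w\mapsto w,\ z\mapsto zw,\ y\mapsto zyz^{-1}$. The splitting is $\phi$--invariant with trivial action on $T/G$. The three vertex groups are self-normalising and satisfy~(a) with $q=1$: every representative preserving one of them restricts to an inner automorphism of a free group, or to $z\mapsto zw$. But $\varphi^n(y)=z^nw^{\binom{n}{2}}yw^{-\binom{n}{2}}z^{-n}$, so $\|\phi^n(xy)\|\sim n^2$. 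Hence no representative preserves all three vertex groups, since such a representative would have $\overline{\mc{O}}_{\rm top}\preceq n$. So your claim that vertex-group generators grow $\preceq n^q$ under a single representative fails. Bounding the twisting conjugators naively by partial sums would lose a degree per level of depth.

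What works is the following. Take $\Phi$ fixing a vertex $v_0$ of the fundamental domain and write $\Phi(v)=h_vv$. Set $H^{(v)}_n:=\varphi^{n-1}(h_v)\cdots\varphi(h_v)h_v$ and $\psi_v:=c_{h_v^{-1}}\circ\varphi$, where $c_g(x)=gxg^{-1}$. Then $\psi_v(V_v)=V_v$ and $\varphi^n=c_{H^{(v)}_n}\circ\psi_v^n$. For a tree edge $[u,v]$ with $u$ nearer $v_0$, one can take $h_v=h_uk$ with $k\in V_u$, and the product telescopes:
\[ H^{(v)}_n=H^{(u)}_n\,\psi_u^{n-1}(k)\cdots\psi_u(k)\,k . \]
So (a) gives $|H^{(v)}_n|\preceq n^{q+1}$, independently of the depth. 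For a non-tree edge $[u,t_ev']$, with $u,v'$ in the lifted maximal tree, the same computation gives $\varphi^n(t_e)=H^{(u)}_nK_nt_eK'_n(H^{(v')}_n)^{-1}$, where $K_n,K'_n$ are partial products of $\psi_u$-- and $\psi_{v'}$--iterates. Hence $\sigma_S(\varphi^n)\preceq n^{q+1}$, and $n^{q+2}$ holds for all representatives by \Cref{rmk:sub-polynomial_independent_of_representative}.

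The paper instead applies \Cref{lem:automorphisms_fixing_BS} one edge at a time. Note, though, that the example shows a degree is genuinely lost already in the amalgamated part, so the multi-edge bookkeeping needs an argument of this telescoping kind. In part~(2) the twisting is harmless, since partial products are absorbed by tameness.

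\textbf{The lower bound in part~(2) is left open.} This is where the paper simply invokes conjugacy-undistortion of $V_j$ to get $\overline{\mf{o}}_{\rm top}(\phi_j)\preceq\overline{\mf{o}}_{\rm top}(\phi)$. You are right that single-element conjugacy-undistortion controls $\|\phi^n(g)\|$ only for each fixed $g\in V_j$, not the simultaneous quantity $\tau$. Your proposed repair is that short conjugacy lengths of $s,s',ss'$ force a common short conjugator. That is a free/hyperbolic-group phenomenon, and nothing in hypothesis~(b), docility included, obviously replaces it; whether $\overline{\mf{o}}_{\rm top}$ is detected by finitely many elements is close to Item~(2) of \Cref{quest}.

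What suffices, and what the paper's one-line deduction implicitly uses, is a tuple version of conjugacy-undistortion. For tuples of fixed size it reads $\min_{y\in V_j}\max_i|yh_iy^{-1}|_{V_j}\leq C\min_{x\in G}\max_i|xh_ix^{-1}|_G+C$, applied to $h_i=\varphi^n(s_i)$ with the $s_i$ generating $V_j$. This holds in the settings of \Cref{ex:conj_undist}. For instance, in the CAT(0) case $\tau$ is comparable to the minimal displacement $\min_p\max_s d(p,\varphi^n(s)p)$, and nearest-point projection onto the convex $V_j$--cocompact subspace does not increase displacement. Without some such input, your argument establishes neither the soundness of $\phi$ nor the sum formula.
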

\begin{proof}
    To begin with, we prove the proposition when $T$ is a one-edge splitting of $G$, that is, an amalgamation or an HNN extension. In each of these two cases, we represent $\phi$ by an automorphism $\varphi\in\Aut(G)$ of the form in \Cref{lem:automorphisms_fixing_BS}. At the end of the proof, we will briefly explain how to handle general graphs of groups based on this.

    Suppose first that $G=A\ast_C B$ with $\varphi(A)=A$ and $\varphi(B)=B$. A general element of $G$ can be written as $g=a_1b_1\dots a_kb_k$ with $a_i\in A$ and $b_i\in B$, and we have
    \[\varphi^n(g)=\varphi^n(a_1)\varphi^n(b_1)\dots\varphi^n(a_k)\varphi^n(b_k) , \qquad\forall n\in\N .\] 
    This yields the simple bound $|\varphi^n(g)|\preceq\sum_{i=1}^k|\varphi^n(a_i)|+\sum_{i=1}^k|\varphi^n(b_i)|$. In part~(1), we immediately obtain $|\varphi^n(g)|\preceq n^q$ for all $g\in G$, and hence $\overline{\mc{O}}_{\rm top}(\varphi)\preceq n^q$. Regarding part~(2), suppose for simplicity that $A$ is of type~(a) and $B$ is of type~(b) (the case when both $A$ and $B$ are of type~(b) is similar). As $\varphi|_B$ is docile, every element $g\in G$ satisfies
    \[ |\varphi^n(g)|\preceq n^q+\overline{\mc{O}}_{\rm top}(\varphi|_B)\preceq \overline{\mc{O}}_{\rm top}(\varphi|_B)\sim\overline{\mf{o}}_{\rm top}([\varphi|_B]) . \] 
    Hence $\overline{\mc{O}}_{\rm top}(\varphi)\preceq \overline{\mf{o}}_{\rm top}([\varphi|_B])$. At the same time, the fact that $B$ is conjugacy-undistorted implies that $\overline{\mf{o}}_{\rm top}(\phi)\succeq \overline{\mf{o}}_{\rm top}([\varphi|_B])$. Thus, we obtain $
    \overline{\mc{O}}_{\rm top}(\varphi)\sim\overline{\mf{o}}_{\rm top}(\phi)\sim\overline{\mf{o}}_{\rm top}([\varphi|_B])$, showing that $\phi$ is docile, as required. Finally, if both vertex groups $A$ and $B$ are of type~(b), the same arguments show that $
    \overline{\mc{O}}_{\rm top}(\varphi)\sim\overline{\mf{o}}_{\rm top}(\phi)\sim\overline{\mf{o}}_{\rm top}([\varphi|_A])+\overline{\mf{o}}_{\rm top}([\varphi|_B])$.

    Suppose now that $G=A\ast_C$ with $\varphi(A)=A$, $\varphi(C)=C$ and $\varphi(t)=ta$, where $t$ is the stable letter of the HNN extension and $a\in A$ is some element. A general element of $G$ can be written as $g=x_0t^{\eps_1}x_1\dots t^{\eps_k}x_k$ with $x_i\in A$ and $\eps_i\in\{\pm 1\}$. We then have 
    \[ \varphi^n(g)=x_0't^{\eps_1}x_1'\dots t^{\eps_k}x_k' ,\] 
    where, setting $a_n:=a\varphi(a)\varphi^2(a)\dots\varphi^{n-1}(a)$, each element $x_i'$ is one of the following four options (depending on the values of $\eps_i$ and $\eps_{i+1}$): either $\varphi^n(x_i)$, or $a_n\varphi^n(x_i)$, or $\varphi^n(x_i)a_n^{-1}$, or $a_n\varphi^n(x_i)a_n^{-1}$. This yields the inequality
    \[ \overline{\mc{O}}_{\rm top}(\varphi)\preceq \overline{\mc{O}}_{\rm top}(\varphi|_A) + |a_n| . \]
    In part~(1), we have $|a_n|\preceq n^{q+1}$ and we obtain $\overline{\mc{O}}_{\rm top}(\varphi)\preceq n^{q+1}$. In part~(2), the fact that $\varphi|_A$ is docile yields $|a_n|\preceq \overline{\mc{O}}_{\rm top}(\varphi|_A)$ (\Cref{rmk:tame_implies_sum-stable}), and hence $\overline{\mc{O}}_{\rm top}(\varphi)\preceq \overline{\mc{O}}_{\rm top}(\varphi|_A)$. At the same time, since $A$ is conjugacy-undistorted, we have $\overline{\mf{o}}_{\rm top}(\phi)\succeq\overline{\mf{o}}_{\rm top}([\varphi|_A])$ and as above $
    \overline{\mc{O}}_{\rm top}(\varphi)\sim\overline{\mf{o}}_{\rm top}(\phi)\sim\overline{\mf{o}}_{\rm top}([\varphi|_A])$, showing that $\phi$ is docile.

    So far, we have only considered a \emph{specific} representative $\varphi$ of the outer class $\phi$. This is irrelevant in part~(2), while in part~(1) it causes the bound $\overline{\mc{O}}_{\rm top}(\varphi)\preceq n^{q+1}$ to translate itself into the weaker bound $\overline{\mc{O}}_{\rm top}(\varphi')\preceq n^{q+2}$ for a general representative $\varphi'$ (\Cref{rmk:sub-polynomial_independent_of_representative}).
    
    This proves the proposition when $G\acts T$ is a one-edge splitting. In general, we can decompose the $\phi$--invariant splitting $G\acts T$ as a finite sequence of $\phi$--invariant one-edge splittings (collapsing all $G$--orbits of edges of $T$ but one, and then adding them back one at a time).
    An iterated application of the one-edge case then immediately proves part~(2). Part~(1) requires a little more care, as we want the polynomial exponent $q$ to increase by at most $2$. For this, it suffices to note that $T$ can be decomposed as a finite sequence of amalgamated products followed by a single multiple HNN extension (a splitting whose quotient graph is a wedge of circles, with a single vertex). Amalgamations do not increase $q$ at all (for a specific representative $\varphi$), while the final multiple HNN splitting increases it by at most $1$ (here the argument is identical to the single HNN case, except that we will have finitely many stable letters $t_i$ and, in general, $\varphi$ can only be put in the form $\varphi(t_i)=a_ita_i'$ with $a_i,a_i'\in A$).
    Finally, there is a further increase by $1$ to handle arbitrary representatives $\varphi'$ using \Cref{rmk:sub-polynomial_independent_of_representative}. This concludes the proof of the proposition.
\end{proof}

\begin{rmk}
    If in part~(2) of \Cref{prop:growth_GOGs} the growth rates $\overline{\mf{o}}_{\rm top}(\phi_i)$ are all pure (\Cref{defn:tame}), then $\overline{\mf{o}}_{\rm top}(\phi)\sim\sum_i\overline{\mf{o}}_{\rm top}(\phi_i)$ simply equals the fastest of the $\overline{\mf{o}}_{\rm top}(\phi_i)$.
\end{rmk}

\section{Free products}\label{subsec:free_products}

A \emph{free splitting} is a minimal action on a simplicial tree $G\acts T$ with trivial edge-stabilisers, with at least one edge, and without edge-inversions. We say that the splitting is \emph{relative} to a family of subgroups $\mc{H}$ if each of the subgroups in $\mc{H}$ fixes a vertex of $T$. Free splittings are typically \emph{not} invariant under the automorphisms that we wish to study (so \Cref{subsec:invariant_GOGs} does not apply), though we can usually place ourselves in the situation where at least the collection of elliptic subgroups of the splitting is preserved by the automorphism.

Let $G$ be a finitely generated group. A \emph{(free) factor system} for $G$ is the collection $\mc{F}$ of all $G$--conjugates of the subgroups $G_1,\dots,G_k\leq G$ appearing in a decomposition 
\[ G=G_1\ast\dots\ast G_k\ast F_m \] 
with $k,m\geq 0$ and $k+m\geq 2$. We require the $G_i$ to be nontrivial, but not that they be freely indecomposable; we allow $\mc{F}$ to be empty if $G\cong F_m$ for $m\geq 2$. The group $G$ admits a factor system whenever it is neither freely indecomposable nor isomorphic to $\Z$. 

Following \cite{GH22}, we say that the pair $(G,\mc{F})$ is \emph{sporadic} if $(k,m)\in\{(2,0),(1,1)\}$. A \emph{$(G,\mc{F})$--free factor} is a subgroup of $G$ arising as a vertex group in a free splitting of $G$ relative to $\mc{F}$. A $(G,\mc{F})$--free factor is \emph{proper} if it is neither the trivial group nor an element of $\mc{F}$. Let $\Out(G,\mc{F})\leq\Out(G)$ be the subgroup of outer automorphisms that leave invariant each $G$--conjugacy class of subgroups in $\mc{F}$. An element $\phi\in\Out(G,\mc{F})$ is \emph{fully irreducible} if none of its (nontrivial) powers preserves the $G$--conjugacy class of a proper $(G,\mc{F})$--free factor. 

The following is an equivalent characterisation of full irreducibles. It shows that, up to taking powers and changing the factor system, we can always reduce to studying full irreducibles. If $\mc{F}_1,\mc{F}_2$ are factor systems for $G$, we write $\mc{F}_1\leq\mc{F}_2$ if each subgroup in $\mc{F}_1$ is contained in a subgroup in $\mc{F}_2$. 

\begin{lem}\label{lem:enlarging_factor_system}
   If $\phi\in\Out(G,\mc{F})$ is not fully irreducible, then there exist a factor system $\mc{F}'>\mc{F}$ and an integer $p\geq 1$ such that $\phi^p\in\Out(G,\mc{F}')$.
\end{lem}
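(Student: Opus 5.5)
Since $\phi$ is not fully irreducible, by definition there is a nontrivial power $\phi^{r}$ and a proper $(G,\mc{F})$--free factor $H$ whose $G$--conjugacy class is $\phi^{r}$--invariant. The plan is to turn $H$ into a $\phi^p$--invariant factor system strictly larger than $\mc{F}$.

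First, choose a free splitting $G\acts S$ relative to $\mc{F}$ having $H$ as a vertex group. Collapsing edges if necessary, $H$ becomes a free factor in a Grushko-type decomposition
\[ G=H\ast G_{j_1}\ast\dots\ast G_{j_s}\ast F_{m'} , \]
where the $G_{j}$ are the factors of $\mc{F}$ not conjugate into $H$. Here we use that every element of $\mc{F}$ is elliptic in $S$, so each is either conjugate into $H$ or into another vertex group. Set $\mc{F}'$ to be the collection of all conjugates of $H$ together with the conjugates of those $G_{j_i}$ not contained in a conjugate of $H$.

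Next we check that $\mc{F}'$ is a factor system with $\mc{F}'>\mc{F}$:
\begin{itemize}
\item[(i)] Each subgroup in $\mc{F}$ lies in a subgroup of $\mc{F}'$ by construction.
\item[(ii)] The inclusion is strict, because $H$ is proper. It is nontrivial and not in $\mc{F}$, and since $H$ is a free factor relative to $\mc{F}$, it cannot equal a conjugate of a single $G_i$.
\item[(iii)] Non-degeneracy $k'+m'\geq2$ holds, since $H\neq G$. Here one should make sure $H$ is a proper subgroup, which follows because $G$ itself is either in $\mc{F}$ or trivially not a vertex group of a splitting with an edge and minimal action.
\end{itemize}

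Finally, $\phi^{r}$ preserves the conjugacy classes in $\mc{F}$ (since $\phi\in\Out(G,\mc{F})$) and also the conjugacy class of $H$, so $\phi^{r}\in\Out(G,\mc{F}')$. Taking $p=r$ then finishes the argument.

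The main obstacle is the technical point that ``$H$ arises as a vertex group of a relative free splitting'' yields a normal-form decomposition in which every factor of $\mc{F}$ is either inside a conjugate of $H$ or a separate free factor. This is where Bass--Serre theory for graphs of groups with trivial edge groups is needed: collapse all edge orbits not adjacent to the orbit of the vertex fixed by $H$, then read off the free product decomposition from the quotient graph.
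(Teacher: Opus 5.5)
Your overall plan is the paper's. You take $r\geq 1$ and a proper $(G,\mc{F})$--free factor $H$ whose conjugacy class is $\phi^r$--invariant. You let $\mc{F}'$ consist of the conjugates of $H$ together with the elements of $\mc{F}$ not contained in a conjugate of $H$. Then you note that $\phi^r\in\Out(G,\mc{F}')$ and $\mc{F}'>\mc{F}$.

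The gap is in the only step with real content: showing that $\mc{F}'$ is a factor system, i.e.\ that $G=H\ast G_1'\ast\dots\ast G_{k'}'\ast F_{m''}$ with the $G_i'$ representing exactly the elements of $\mc{F}$ not conjugate into $H$. Collapsing edges of $S$ only merges vertex groups. So reading off the quotient graph of groups gives at best $G=H\ast V_1\ast\dots\ast V_s\ast F_{m'}$, where $V_1,\dots,V_s$ are the other vertex groups. Ellipticity of $\mc{F}$ in $S$ then tells you that each $G_j$ not conjugate into $H$ has a conjugate inside some $V_i$. It does not tell you that $V_i$ is freely generated by such conjugates together with a free group. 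That is precisely what has to be proved, and your proposed mechanism does not prove it.

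The missing ingredient is the Kurosh subgroup theorem. Equivalently, you can \emph{refine} $S$ (rather than collapse it) by blowing up each vertex with stabiliser $V_i$, using the action of $V_i$ on the Bass--Serre tree of $G_1\ast\dots\ast G_k\ast F_m$. Kurosh gives $V_i=F\ast\ast_\alpha\big(V_i\cap g_\alpha G_{j_\alpha}g_\alpha^{-1}\big)$ with $F$ free. Each nontrivial intersection is all of $g_\alpha G_{j_\alpha}g_\alpha^{-1}$:
\begin{enumerate}
\item the conjugate $g_\alpha G_{j_\alpha}g_\alpha^{-1}$ fixes some vertex $w$ of $S$;
\item a nontrivial element of the intersection fixes both $w$ and the vertex $v$ fixed by $V_i$;
\item triviality of edge stabilisers forces $v=w$, hence $g_\alpha G_{j_\alpha}g_\alpha^{-1}\leq V_i$.
\end{enumerate}
Substituting these decompositions of the $V_i$ into $G=H\ast V_1\ast\dots\ast V_s\ast F_{m'}$ yields the required free product decomposition. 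This is how the paper argues.

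Note also that, for a fixed $H$, the complementary factors must in general be suitable $G$--conjugates of the $G_j$, not the $G_j$ themselves. This is harmless, since $\mc{F}'$ only involves conjugacy classes.
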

\begin{proof}
    Write $G=G_1\ast\dots\ast G_k\ast F_m$, where the elements of $\mc{F}$ are precisely the conjugates of the $G_i$. If $\phi\in\Out(G,\mc{F})$ is not fully irreducible, let $H$ be a proper $(G,\mc{F})$--free factor preserved by a power $\phi^p$ with $p\geq 1$. Define 
    \[ \mc{F}':=\{gHg^{-1}\mid g\in G\}\cup\mc{F}_0 ,\] 
    where $\mc{F}_0\sq\mc{F}$ is the subset of subgroups that are not contained in any $G$--conjugate of $H$.

    By definition, $G$ has a free splitting relative to $\mc{F}$ in which $H$ is a vertex group; let $V_1,\dots,V_s$ be representatives of the other $G$--conjugacy classes of (nontrivial) vertex groups in this splitting. Thus, we have $G=H\ast V_1\ast\dots V_s\ast F_{m'}$ for some $m'\geq 0$. Each $V_i$ is a free product of a free group and $G$--conjugates of some of the $G_j$ (which happen to lie in $V_i$). Thus, we can also write $G=H\ast G_1'\ast\dots\ast G_{k'}'\ast F_{m''}$, where each $G_i'$ is a $G$--conjugate of some $G_j$ and the $G$--conjugates of the $G_i'$ are precisely the elements of $\mc{F}_0$. This shows that $\mc{F}'$ is indeed a factor system for $G$.

    It is immediate that $\phi^p\in\Out(G,\mc{F}')$ and that $\mc{F}'>\mc{F}$, so this completes the proof.
\end{proof}

As a first step, we can characterise growth of fully irreducible automorphisms in terms of their growth on the elements of the factor system. This is a relatively straightforward (but also rather fiddly) application of train track maps for free products \cite{CT94,FM15,Lyman-train-track}. 

Recall that $\mc{G}(\varphi)\sq\mf{G}$ and $\mf{g}(\phi)\sq\mf{G}$ denote the sets of growth rates of an automorphism $\varphi$ and an outer automorphism $\phi$, respectively. The rate $\overline{\mf{o}}_{\rm top}(\phi)$ always bounds all elements of $\mf{g}(\phi)$ from above, but it might not lie in $\mf{g}(\phi)$ a priori; the same is true of $\overline{\mc{O}}_{\rm top}(\varphi)$ and $\mc{G}(\varphi)$. Also recall that we refer to polynomial-times-exponential growth rates as \emph{pure} (\Cref{defn:tame}).

\begin{prop}\label{prop:growth_estimate_fully_irreducible}
    Consider a finitely generated group $G=G_1\ast\dots\ast G_k\ast F_m$, let $\mc{F}$ be the factor system given by the $G_i$, and let $\phi\in\Out(G,\mc{F})$. Suppose that $\phi$ is fully irreducible and $\mc{F}$ is non-sporadic. In addition, suppose that each restriction $\phi_i:=\phi|_{G_i}\in\Out(G_i)$ is represented by some $\varphi_i\in\Aut(G_i)$ such that either $\overline{\mc{O}}_{\rm top}(\varphi_i)$ is sub-polynomial, or $\varphi_i$ is $(\lambda_i,p_i)$--docile for some $\lambda_i>1$ and $p_i\in\N$. Then there exists a Perron number $\lambda>1$ such that all the following hold.
    \begin{enumerate}
        \item The automorphism $\phi$ is $(\mu,q)$--docile for $\mu=\max\{\lambda_1,\dots,\lambda_k,\lambda\}$ and some $q\in\N$. We have $q\leq p_j+1$ for the largest integer $p_j$ such that $\lambda_j=\mu$; if no such index $j$ exists, then $q=0$.
        \item If we have $[\lambda_i^n]\in\mf{g}(\phi_i)$ for all indices $i$ such that $\lambda_i=\mu$, then $[\mu^n]\in\mf{g}(\phi)$.
        \item Suppose that there exist a finite set $\Lambda\sq\R_{>1}$ and $P\in\N$ such that, for all indices $i$ and all representatives $\varphi_i'\in\Aut(G_i)$ of $\phi_i\in\Out(G_i)$, each growth rate in the union $\mc{G}(\varphi_i')\cup\mf{g}(\phi_i)$ is either sub-polynomial or equal to $[n^a\nu^n]$, for some $\nu\in\Lambda$ and some integer $0\leq a\leq P$. Then, each growth rate in the set $\mf{g}(\phi)$ is either sub-polynomial, or equal $[\lambda^n]$, or equal to $[n^a\nu^n]$ for some $\nu\in\Lambda$ and some integer $0\leq a\leq P+1$.
    \end{enumerate}
\end{prop}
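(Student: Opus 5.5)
The approach is to use relative train track maps for free products \cite{CT94,FM15,Lyman-train-track}. Since $\phi\in\Out(G,\mc{F})$ is fully irreducible and $\mc{F}$ is non-sporadic, there is a $G$--tree $T$ with trivial edge stabilisers and vertex stabilisers exactly the elements of $\mc{F}$ (plus trivial ones), together with a $\phi$--equivariant train track map $f\colon T\to T$. Its transition matrix on the finitely many edge orbits is irreducible with Perron--Frobenius eigenvalue $\lambda>1$; this $\lambda$ is the Perron number in the statement. Full irreducibility, possibly after passing to a power, should give that the matrix is primitive. I will fix representatives $\varphi_i$ of $\phi_i$ compatible with $f$ at the vertices $v_i$ fixed by $G_i$. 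Then $f$ maps an edge $e$ to a legal path $e_1 x_1 e_2\dots x_{r-1} e_r$. The $x_j$ are "vertex group letters" lying in conjugates of the $G_i$. Iterating $f$, the path $f^n(e)$ is built from about $\lambda^n$ edges, and at each turn there is a vertex-group element of the form $\varphi_i^{m}(y)$ times accumulated twisting products $y\varphi_i(y)\cdots\varphi_i^{m-1}(y)$, with $m\le n$.

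The key estimate is for the length of $f^n(e)$, measured in $G$ via a fixed equivariant quasi-isometry from the tree with its vertex-group word metrics. I plan to write it as
\[ \ell(f^n(e))\sim\sum_{m\le n}\lambda^{n-m}\, N_m ,\]
where $N_m$ is the top growth of vertex letters created at time $m$. For sub-polynomial $\varphi_i$, $N_m\preceq m^{d}$. For docile $\varphi_i$, \Cref{rmk:tame_implies_sum-stable} handles the twisted products, and $N_m\preceq a_m\lambda_i^m$ with $a_m\preceq m^{p_i}$ weakly increasing. Summing the convolution gives three cases. If $\mu=\lambda$ is strictly larger than all $\lambda_i$, the result is $[\lambda^n]$. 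If $\mu=\lambda_j>\lambda$, the result is $[a_n\lambda_j^n]$. If $\lambda=\lambda_j$, the result is $[n a_n\lambda^n]$. This last case is the source of the bound $q\le p_j+1$. Each of these is tame, with a weakly increasing prefactor.

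Next I will bound $\overline{\mc{O}}_{\rm top}(\varphi)$ above using generators whose images are paths through a basepoint. I will bound $\overline{\mf{o}}_{\rm top}(\phi)$ below using a loop crossing every edge orbit: legal loops grow without cancellation, and the lower bound persists up to bounded cancellation. This gives soundness and hence part~(1). For part~(2), take a legal loop. If it has no vertex letters with growth $\lambda_i=\mu$, the count of its edges gives $[\lambda^n]$. If $\lambda_i=\mu$, choose letters realising $[\lambda_i^n]$ in $\mf{g}(\phi_i)$. For part~(3), decompose an arbitrary conjugacy class, after finitely many iterations, into legal segments and indivisible Nielsen paths, using the standard bounded-cancellation and eventually-legal argument. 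It is either elliptic, hence governed by $\mf{g}(\phi_i)$, or its growth is a finite sum of the pieces above. Each piece is sub-polynomial, $[\lambda^n]$, or $[n^a\nu^n]$ with $a\le P+1$, the $+1$ coming only from the convolution when $\nu=\lambda$.

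The main obstacle I expect is cancellation inside vertex groups at illegal turns and along Nielsen paths. Growth of the vertex letters there is governed by the $\mc{G}(\varphi_i')$ for varying representatives $\varphi_i'$, which is why those sets appear in hypothesis~(3). I will first try to show that the illegal turns stay bounded in number and that their vertex contributions are images under representatives $\varphi_i'$. This should make cancellation affect only lower-order terms.
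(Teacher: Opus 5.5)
Your overall architecture matches the paper's, and it is essentially adequate for parts~(1) and~(2). That architecture is: a train track on the graph of groups, a primitive transition matrix after passing to a power, the convolution $\sum_{m\le n}N_m\lambda^{n-m}$ built from the twisted vertex letters at the nodes of $f^n(e)$, and a comparison of $\lambda$ with the $\lambda_i$.

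\textbf{The gap is in part~(3).} There you need $\sum_{m\le n}N_m\lambda^{n-m}$ to be \emph{exactly} pure. That requires exact growth, not just upper bounds, of the elements occupying the order-$m$ nodes. Up to absorbing bounded factors, these have the form
\[ u_m=a\varphi_i(a)\cdots\varphi_i^{m-1}(a)\cdot\varphi_i^m(c)\cdot\varphi_i^{m-1}(e)\cdots\varphi_i(e)\,e ,\qquad a,c,e\ \text{in a fixed finite set}. \]
Hypothesis~(3) only controls orbits $n\mapsto|(\varphi_i')^n(w)|$ of single elements. The element $u_m$ is not of that form; already $a\varphi_i(a)\cdots\varphi_i^{m-1}(a)$ is not. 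So your plan to show that the vertex contributions are images under representatives $\varphi_i'$ will not go through.

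The need for varying representatives also arises already on the legal paths $f^n(e)$, before any cancellation at illegal turns or Nielsen paths. Bounded cancellation itself needs no such hypothesis.

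The missing idea is a telescoping identity. Set $e_m:=\varphi_i^{m-1}(e)\cdots\varphi_i(e)e$, and let $\eta:=e^{-1}\varphi_i(\cdot)\,e$, which is another representative of $\phi_i$. Then $\eta^m(x)=e_m^{-1}\varphi_i^m(x)e_m$, and hence
\[ u_m^{-1}u_{m+1}=\eta^m\bigl(c^{-1}a\varphi_i(c)e\bigr). \]
So the consecutive quotients have growth in $\mc{G}(\eta)$: either sub-polynomial or $\sim m^a\nu^m$.
\begin{enumerate}
\item Summing with $\nu>1$ gives $|u_m|\preceq m^a\nu^m$.
\item The triangle inequality gives $|u_m|+|u_{m+1}|\succeq m^a\nu^m$.
\end{enumerate}
Only the pair $|u_m|+|u_{m+1}|$ is controlled, since $|u_m|$ alone may oscillate. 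You then evaluate the convolution two terms at a time. This yields $\lambda^n$, $n^a\nu^n$ or $n^{a+1}\nu^n$ according as $\nu<\lambda$, $\nu>\lambda$ or $\nu=\lambda$. This is exactly where quantifying over all representatives $\varphi_i'$ is used.

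\textbf{A smaller point in part~(1).} You only know $N_m\preceq a_m\lambda_i^m$, so the rates you assert for the convolution ($[a_n\lambda_j^n]$ and $[na_n\lambda^n]$) are only upper bounds. For example, if the vertex letters in the paths $f(e)$ are fixed by the $\varphi_i$, then $N_m\preceq m$ and the convolution is $\sim\lambda^n$ whatever the $\lambda_i$.

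Tameness should instead be read off from $\lambda^n\sum_{j\le n}N_j\lambda^{-j}$. Its prefactor is automatically weakly increasing and $\preceq n^{p+1}$. Add to it the tame rate $\sum_i\overline{\mc{O}}_{\rm top}(\varphi_i)$, which supplies the lower bound. This gives $q\le p_j+1$ without any exact knowledge of $N_m$.

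\textbf{In part~(2).} When some $\lambda_i=\mu$, simply take the element of $G_i$ realising $[\lambda_i^n]$; free factors are conjugacy-undistorted. Do not insert it into a legal loop, whose growth may exceed $\mu^n$ when $\lambda_i=\lambda$.
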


\begin{proof}
    We begin with a general discussion that only assumes finite generation of $G$ and no additional properties of the restrictions $\phi|_{G_i}$; this discussion will culminate in Equations~\ref{eq:total_length_estimate} and~\ref{eq:O_estimate} below, estimating the word-length growth of elements not conjugate into an element of $\mc{F}$, and the top growth rate of $\phi$, respectively. After this, we will add the docility assumption and draw the necessary conclusions. Throughout, we omit some technical details in the interest of overall clarity.

    We start by representing $G$ as the fundamental group of a graph of groups $\mc{G}$ with trivial edge groups and precisely the elements of $\mc{F}$ as $G$--conjugates of the nontrivial vertex groups. We can find $\mc{G}$ so that, in addition, $\phi$ is realised by a relative train track map $f\colon\mc{G}\ra\mc{G}$; see e.g.\ \cite[Section~1]{Lyman-CTs}. Let $\overline{\mc{G}}$ be the finite graph underlying the graph of groups $\mc{G}$ and let $\overline f\colon\overline{\mc{G}}\ra\overline{\mc{G}}$ be the map of graphs underlying $f$. Choosing a base vertex $p$ and a spanning tree for $\overline{\mc{G}}$, we identify the vertex groups of $\mc{G}$ with the $G_i$, and we fix a representative $\varphi\in\Aut(G)$ of the outer class $\phi$. For notational convenience, choose automorphisms $\varphi_i\in\Aut(G_i)$ representing the restrictions $\phi_i\in\Out(G_i)$, and denote by $\psi$ the self-bijection of the disjoint union $\bigsqcup_i G_i$ that equals $\varphi_i$ on each $G_i$ (note that $\psi$ is not the restriction of a single element of $\Aut(G)$ in general).
    
    A \emph{generalised path} is a string $\pi=g_0e_1g_1\dots e_sg_s$ such that $e_1\dots e_s$ is an (oriented) edge path in $\overline{\mc{G}}$ and each $g_j$ lies in the vertex group $G_i$ associated to the terminal vertex of $e_j$ (which is also the initial vertex of $e_{j+1}$). The \emph{edge length} of $\pi$ is $\ell(\pi):=s$ and the \emph{total length} is $|\pi|:=s+\sum_j|g_j|$, where the word lengths $|g_j|$ are computed with respect to some fixed choice of finite generating sets for the $G_i$. The generalised path $\pi$ is \emph{immersed} if we do not have $g_j=1$ and $e_{j+1}=e_j^{-1}$ for any index $j$. Elements of $G$ are in $1$--to--$1$ correspondence with closed, immersed generalised paths based at $p\in\overline{\mc{G}}$. The word length of an element $g\in G$ is roughly the same as the total length of the generalised path $\pi$ representing $g$, up to a multiplicative constant independent of $g$.

    Since $\phi$ is fully irreducible, no power of $\overline f$ leaves invariant a proper subgraph of $\overline{\mc{G}}$ with at least one edge (up to collapsing some edges of $\mc{G}$ without altering the vertex groups). In particular, the train track map $f$ has only one stratum containing edges; let $\lambda\geq 1$ be the Perron--Frobenius eigenvalue of its transition matrix. Up to raising $\phi$ and $f$ to a power, we can further assume that:
    \begin{enumerate}
        \item[(i)] $\overline f$ fixes every vertex of $\overline{\mc{G}}$;
        \item[(ii)] for each edge $e\sq\mc{G}$, the path $\overline f(e)$ contains all edges of $\overline{\mc{G}}$;
        \item[(iii)] for each (oriented) edge $e\sq\mc{G}$, the edge paths $\overline {f^n}(e)\sq\overline{\mc{G}}$ all start with the same edge $e_1$ and end with the same edge $e_2$, for $n\geq 1$.
    \end{enumerate}
    Since $\mc{F}$ is non-sporadic, we do not have $\overline f(e)=e$ for any edge $e$. Thus, the edge lengths $\ell(f^n(e))$ grow exponentially with $n$ and hence $\lambda\neq 1$.

    If $\pi=g_0e_1g_1\dots e_sg_s$ is a generalised path, its image $f(\pi)$ is $\psi(g_0)f(e_1)\psi(g_1)\dots f(e_s)\psi(g_s)$, recalling that we have defined $\psi=\varphi_i$ on each $G_i$. Here, each $f(e_i)$ is a generalised path having $\overline f(e_i)$ as underlying edge path. 
    
    In order to easily estimate the total length of the paths $f^n(e)$, we define one last auxiliary concept. If $\pi=g_0e_1g_1\dots e_sg_s$, we refer to the slots between consecutive edges as the \emph{nodes} of $\pi$; thus, $\pi$ has $s+1$ nodes and they are occupied by the elements $g_0,\dots,g_s$. To each node of $f^n(\pi)$, we inductively associate an \emph{order}, which is an integer between $0$ and $n$. All nodes of $\pi$ have order $0$. If $e$ is an edge of $f^{n-1}(\pi)$, then $f(e)$ is a subpath of $f^n(\pi)$; all nodes of $f^n(\pi)$ that are interior nodes of $f(e)$ for some edge $e\sq f^{n-1}(\pi)$ are also declared to have order $0$. Every other node of $f^n(\pi)$ is the image under $f$ of a node of $f^{n-1}(\pi)$; if the node of $f^{n-1}(\pi)$ had order $m$, for some $m\geq 0$, then we declare the corresponding node of $f^n(\pi)$ to have order $m+1$. The initial and terminal nodes of $f^n(\pi)$ thus have order $n$.

    A straightforward computation shows that, for all $0\leq i<n$, the path $f^n(\pi)$ has exactly $\ell(f^{n-i}(\pi))-\ell(f^{n-i-1}(\pi))$ order--$i$ nodes, and it has $\ell(\pi)+1$ order--$n$ nodes. Choose a constant $C\geq 1$ such that every edge $e\sq\mc{G}$ satisfies
    \[ \tfrac{1}{C}\lambda^n \leq \ell(f^n(e))\leq C\lambda^n, \qquad \forall n\in\N . \]
    Since $\ell(f(e))\geq 2$ for all edges $e\sq\mc{G}$, we also have $\ell(f^n(e))-\ell(f^{n-1}(e))\geq\ell(f^{n-1}(e))\geq \frac{1}{\lambda C}\lambda^n$.
    
    Let $\Om$ be the finite set of group elements appearing at the nodes of the paths $f(e)$, as $e$ varies through the edges of $\mc{G}$; we add the identity $1\in G$ to $\Om$ to simplify notation in the coming discussion. For any $n\geq 0$, every order--$0$ node of $f^n(e)$ is occupied by an element lying in $\Om$. Property~(iii) implies that, for $1\leq i\leq n$, every order--$i$ node of $f^n(e)$ is occupied by an element of the form 
    \[ \left(a\psi(a)\psi^2(a)\dots\psi^{i-2}(a)\right)\cdot\psi^{i-1}(b)\psi^i(c)\psi^{i-1}(d)\cdot\left( \psi^{i-2}(e)\dots\psi^2(e)\psi(e)e\right), \quad \text{where } a,b,c,d,e\in\Om .\]
    In particular, fixing $i$, there is only a fixed finite set of elements that can occupy the order--$i$ nodes in the paths $f^n(e)$, as $n$ and $e$ are allowed to vary. Moreover, this finite set has at most $|\Om|^5$ elements, independently of $i$.
    Denote by $N_i$ the average word length of these finitely many elements.

    For any edge $e\sq\mc{G}$, we can write 
    \[  |f^n(e)| = \ell(f^n(e)) + \sum_{i=0}^{n-1} N_i^{e,n}\cdot \big(\ell(f^{n-i}(e))-\ell(f^{n-i-1}(e)) \big) + N_n^{e,n}(\ell(e)+1) ,\]
    where $N_i^{e,n}$ is the (weighted) average word length of the elements occupying the order--$i$ nodes of $f^n(e)$. Recalling property~(ii), the $N_i^{e,n}$ are roughly equal to $N_i$, up to a multiplicative constant independent of $e,i,n$. Hence, up to slightly enlarging the constant $C$ chosen above, we have
    \[ \frac{1}{C}\cdot \sum_{i=0}^n N_i\lambda^{n-i} \leq |f^n(e)| \leq  C \sum_{i=0}^n N_i\lambda^{n-i} , \qquad \text{for all $n\in\N$ and $e\sq\mc{G}$.} \]
    For any immersed path $\pi=g_0e_1g_1\dots e_sg_s$, denote by $|f^n(\pi)|_{\rm pt}$ the length of the path $f^n(\pi)$ \emph{pulled tight}, that is, the length of the immersed path homotopic to $f^n(\pi)$. If the length $|f^n(\pi)|_{\rm pt}$ does not stay bounded as $n$ increases, the above inequalities allow us to estimate
    \begin{equation}\label{eq:total_length_estimate}
        |f^n(\pi)|_{\rm pt}\sim \sum_{j=0}^s|\psi^n(g_j)| + \ell(\pi)\cdot \sum_{j=0}^n N_j\lambda^{n-j} ,
    \end{equation}
    where the multiplicative constant implicit in the symbol $\sim$ does not depend on $\pi$ or the integer $n$. Here, the inequality $\gtrsim$ uses bounded backtracking \cite{Cooper} as in \cite{BH92}, working in the graph $\overline{\mc{G}}$.

    \Cref{eq:total_length_estimate} shows that, for any element $g\in G$ not conjugate into one of the subgroups $G_1,\dots,G_k$, either the growth rate $\big[\,\|\phi^n(g)\|\,\big]$ is bounded, or we have:
    \begin{equation}\label{eq:pw_ineq}
        \big[\|\phi^n(g)\|\big]\sim \big[|\varphi^n(g)|\big] \preceq \sum_{i=1}^k\overline{\mc{O}}_{\rm top}(\varphi_i) + \big[\sum_{j=0}^n N_j\lambda^{n-j}\big] .
    \end{equation}
    In particular, we claim that this implies the following equality in $\mf{G}$: 
    \begin{equation}\label{eq:O_estimate}
        \overline{\mc{O}}_{\rm top}(\varphi) \sim \sum_{i=1}^k\overline{\mc{O}}_{\rm top}(\varphi_i) + \big[\sum_{j=0}^n N_j\lambda^{n-j}\big] .
    \end{equation}
    The inequality $\preceq$ is immediate from the definition of $\overline{\mc{O}}_{\rm top}(\varphi)$ and \Cref{eq:pw_ineq}. Conversely, it is clear that $\overline{\mc{O}}_{\rm top}(\varphi) \succeq \overline{\mc{O}}_{\rm top}(\varphi_i)$ for all $i$, while it follows from \Cref{eq:total_length_estimate} that we also have $\overline{\mc{O}}_{\rm top}(\varphi) \succeq \sum_{j=0}^n N_j\lambda^{n-j}$. Thus, $\overline{\mc{O}}_{\rm top}(\varphi)$ is (coarsely) bounded below by the sum of these rates.

    The entire discussion up to this point was completely general: we have not made use of any assumptions on the $G_i$ other than finite generation. 
    
    From now on assume that, for each $i$, either $\varphi_i\in\Aut(G_i)$ is $(\lambda_i,p_i)$--docile for some $\lambda_i>1$ and $p_i\in\N$, or the growth rate $\overline{\mc{O}}_{\rm top}(\varphi_i)$ is sub-polynomial (in which case, we set $\lambda_i:=1$ for convenience). We proceed to discuss the claims in the various parts of the proposition.

    \smallskip
    {\bf Part~(1).} Recalling \Cref{rmk:tame_implies_sum-stable} and the definition of $N_n$, we have
    \begin{equation}\label{eq:N_n_estimate}
        [N_n]\preceq \sum_{i=1}^k\overline{\mc{O}}_{\rm top}(\varphi_i) \preceq [n^p\lambda_*^n],
    \end{equation}
    where $\lambda_*:=\max\{\lambda_1,\dots,\lambda_k\}$, and $p$ is defined as the maximum of the $p_j$ such that $\lambda_j=\lambda_*$. Consequently, \Cref{eq:O_estimate} yields:
    \begin{itemize}
        \item if $\lambda_*>\lambda$, then $\overline{\mc{O}}_{\rm top}(\varphi) \sim \sum_{i=1}^k\overline{\mc{O}}_{\rm top}(\varphi_i)$ and the latter is $(\lambda_*,p)$--tame;
        \item if $\lambda_*\leq\lambda$, then $\overline{\mc{O}}_{\rm top}(\varphi)\sim\sum_{i=1}^k\overline{\mc{O}}_{\rm top}(\varphi_i)+\big[\sum_{j=0}^n N_j\lambda^{n-j}\big]$, where the latter growth rate satisfies $[\lambda^n]\preceq  \big[\sum_{j=0}^n N_j\lambda^{n-j}\big]\preceq [n^{p+1}\lambda^n]$. In particular, rewriting this growth rate as $\big[\lambda^n\cdot\sum_{j=0}^n N_j\lambda^{-j}\big]$, 
        we see that it is $(\lambda,p+1)$--tame.
    \end{itemize}

    We are only left to show that $\varphi$ is sound, namely that we have $\overline{\mc{O}}_{\rm top}(\varphi)\preceq\overline{\mf{o}}_{\rm top}(\phi)$. If $g\in G$ is not conjugate into any of the $G_i$, and its conjugacy class does not have finite $\phi$--orbit, then \Cref{eq:pw_ineq} shows that $|\varphi^n(g)|\sim\|\phi^n(g)\|\preceq\overline{\mf{o}}_{\rm top}(\phi)$. If instead $g$ is conjugate into some $G_i$, then $|\varphi^n(g)|\preceq\overline{\mc{O}}_{\rm top}(\varphi_i)\sim\overline{\mf{o}}_{\rm top}(\phi_i)\preceq\overline{\mf{o}}_{\rm top}(\phi)$. In conclusion, we have $|\varphi^n(g)|\preceq\overline{\mf{o}}_{\rm top}(\phi)$ for all $g\in G$, which yields $\overline{\mc{O}}_{\rm top}(\varphi)\preceq\overline{\mf{o}}_{\rm top}(\phi)$ as required.

    \smallskip
    {\bf Part~(2).} If $\lambda_*<\lambda$, then \Cref{eq:N_n_estimate} shows that $\sum_{j=0}^n N_j\lambda^{n-j}\sim\lambda^n$. Thus, \Cref{eq:pw_ineq} implies that $\|\phi^n(g)\|\sim\lambda^n$ for all elements $g\in G$ not conjugate into any $G_i$, and whose conjugacy class is not preserved by a power of $\phi$. We thus have $[\lambda^n]\in\mf{g}(\phi)$ in this case. If instead $\lambda_*\geq\lambda$, then the assumptions of part~(2) imply that $[\lambda_*^n]\in\mf{g}(\phi_i)\sq\mf{g}(\phi)$ for some index $i$. Either way, recalling that $\mu=\max\{\lambda_*,\lambda\}$, we have $[\mu^n]\in\mf{g}(\phi)$.

    \smallskip
    {\bf Part~(3).} Suppose that there exist a finite set $\Lambda\sq\R_{>1}$ and $P\in\N$ such that, for all indices $i$ and all representatives $\varphi_i'\in\Aut(G_i)$ of $\phi_i\in\Out(G_i)$, each growth rate in the union $\mc{G}(\varphi_i')\cup\mf{g}(\phi_i)$ is either sub-polynomial or equal to $[n^a\nu^n]$, for some $\nu\in\Lambda$ and some integer $0\leq a\leq P$.
    
    Our goal is to precisely estimate the growth rate $\big[\,\|\phi^n(g)\|\,\big]$ for all elements $g\in G$ not conjugate into any $G_i$ and such that $\|\phi^n(g)\|\not\sim 1$. \Cref{eq:total_length_estimate} shows that $\big[\,\|\phi^n(g)\|\,\big]$ is a sum of finitely many growth rates in $\bigcup_i\mc{G}(\varphi_i)$ with the growth rate $\big[\,\sum_{j=0}^nN_j\lambda^{n-j}\,\big]$. Since a finite sum of pure growth rates equals the fastest among them, it suffices to show that either $\sum_{j=0}^nN_j\lambda^{n-j}\sim\lambda^n$, or $\sum_{j=0}^nN_j\lambda^{n-j}\sim n^a\nu^n$ for some $\nu\in\Lambda$ and an integer $0\leq a\leq P+1$.

    Recall that the integer $N_{j+1}$ is defined as the average word length of a uniformly bounded number of elements of the form 
    \[ a\psi(a)\dots\psi^{j-1}(a)\cdot\psi^j(c)\cdot\psi^{j-1}(e)\dots\psi(e)e, \]
    with $a,c,e\in G_i$ for some index $i$. We will thus need the following observation.

    \smallskip
    {\bf Claim.} \emph{Consider some $a,c,e\in G_i$ and set $u_n:=a\psi(a)\dots\psi^{n-1}(a)\cdot \psi^n(c)\cdot\psi^{n-1}(e)\dots\psi(e)e$. Then the growth rate $\big[\,|u_n|+|u_{n+1}|\,\big]$ is either sub-polynomial or equal to $[n^a\nu^n]$ for some $\nu\in\Lambda$ and an integer $0\leq a\leq P$.}

    \smallskip\noindent
    \emph{Proof of claim.}
    Setting for simplicity $e_n:=\psi^{n-1}(e)\dots\psi(e)e$, observe that we have
    \[ u_n^{-1}u_{n+1}=e_n^{-1}\psi^n\big(c^{-1}a\psi(c)e\big)e_n .\]
    Recall that $\psi|_{G_i}$ coincides with the automorphism $\varphi_i\in\Aut(G_i)$, and let $\eta\in\Aut(G_i)$ be the automorphism defined by $\eta(x)=e^{-1}\varphi_i(x)e$. For every $n\geq 1$, we have $\eta^n(x)=e_n^{-1}\varphi_i^n(x)e_n$. Thus, setting $w:=c^{-1}a\psi(c)e$, we can rewrite the above equality simply as
    \[ u_n^{-1}u_{n+1}=\eta^n(w). \]
    As $\eta$ is in the same outer class as $\varphi_i$, the hypotheses of part~(3) guarantee that every growth rate in $\mc{G}(\eta)$ is sub-polynomial or equal to $[n^a\nu^n]$ for $\nu\in\Lambda$ and $0\leq a\leq P$. Thus, the same is true of the growth rate $\big[\,|u_n^{-1}u_{n+1}|\,\big]$.

    If $\big[\,|u_n^{-1}u_{n+1}|\,\big]$ is sub-polynomial, then a telescopic argument shows that $\big[\,|u_n|\,\big]$ is sub-polynomial. Suppose instead that $|u_n^{-1}u_{n+1}|\sim n^a\nu^n$. Then, we similarly get $|u_n|\preceq\sum_{j=1}^nj^a\nu^j\preceq n^a\nu^n$,
    while the triangle inequality yields $|u_n|+|u_{n+1}|\succeq |u_n^{-1}u_{n+1}|\sim n^a\nu^n$. In conclusion, we obtain the equality $|u_n|+|u_{n+1}|\sim n^a\nu^n$, as desired.
    \hfill$\blacksquare$

    \smallskip
    Now, the claim implies that the growth rate $\big[N_n+N_{n+1}\big]$ is sub-polynomial or equal to $[n^a\nu^n]$ with $\nu\in\Lambda$ and $0\leq a\leq P$. Carrying out the sum two terms at a time, this shows that $\lambda^n\cdot\sum_{j=0}^n N_j\lambda^{-j}$ is $\sim\lambda^n$ if $\nu<\lambda$, while it is $\sim n^a\nu^n$ if $\nu>\lambda$, and finally $\sim n^{a+1}\nu^n$ if $\nu=\lambda$.
    
    This concludes the proof of part~(3) and of the entire proposition.
\end{proof}

The restriction to fully irreducible automorphisms in \Cref{prop:growth_estimate_fully_irreducible} was meant to simplify the already technical proof. However it can be easily removed, which we do in the next corollary.

\begin{cor}\label{cor:growth_free_product}
    Let $G$ be finitely generated and infinitely ended, with freely indecomposable free factors $H_i$. Consider $\phi\in\Out(G)$, let $\phi_i\in\Out(H_i)$ be the restrictions of $\phi$, and let $\varphi_i\in\Aut(H_i)$ be representatives of the $\phi_i$. For each $i$, suppose that either $\overline{\mc{O}}_{\rm top}(\varphi_i)$ is sub-polynomial, or $\varphi_i$ is $(\lambda_i,p_i)$--docile for some $\lambda_i>1$ and $p_i\in\N$. Then all the following hold.
    \begin{enumerate}
        \item Either $\phi$ is represented by some $\varphi\in\Aut(G)$ with sub-polynomial $\overline{\mc{O}}_{\rm top}(\varphi)$, or $\phi$ is $(\mu,q)$--docile for some $\mu>1$ and $q\in\N$, where $\mu$ is either $\max_i\lambda_i$ or a larger Perron number.
        \item If we have $[\lambda_i^n]\in\mf{g}(\phi_i)$ for all indices $i$ such that $\lambda_i=\mu$, then $[\mu^n]\in\mf{g}(\phi)$. 
    \end{enumerate}
\end{cor}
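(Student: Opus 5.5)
Reduce to the fully irreducible case by induction along a chain of factor systems, then run the induction upward using \Cref{prop:growth_estimate_fully_irreducible} (free factor systems) and \Cref{prop:growth_GOGs} (invariant free splittings).

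Since all statements are insensitive to replacing $\phi$ by a power, I pass to powers freely. A power changes $\mu$ to $\mu^r$ and rates $[x_n]$ to $[x_{rn}]$; tameness and docility transfer back because of \Cref{lem:aut_bilip}, which controls the intermediate iterates. Let $\mc{F}_0$ be the factor system given by the $H_i$ (Grushko). Then $\phi$ permutes the conjugacy classes in $\mc{F}_0$, and a power lies in $\Out(G,\mc{F}_0)$.

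Now build a maximal chain $\mc{F}_0<\mc{F}_1<\dots<\mc{F}_r$ of factor systems, at each stage applying \Cref{lem:enlarging_factor_system} and passing to a power so that $\phi\in\Out(G,\mc{F}_j)$. Each chain has bounded length, for instance by a complexity count on $(k,m)$. At the top, $\phi$ restricted to the top-level factor system is either fully irreducible or the system is sporadic. The factors of $\mc{F}_{j+1}$ are free products of factors of $\mc{F}_j$ with free groups, and $\phi$ restricted to each such factor $K$ lies in $\Out(K,\mc{F}_j|_K)$.

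The induction runs bottom-up over these nested factors $K$ and proves the dichotomy for each: $\phi|_K$ has either a sub-polynomial representative or a $(\mu_K,q_K)$-docile one, together with the $[\mu^n]$ realisation property. There are three cases.

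\begin{enumerate}
\item If $\phi|_K$ is fully irreducible relative to a non-sporadic system, apply \Cref{prop:growth_estimate_fully_irreducible}. Its part~(1) gives docility with $\mu=\max\{\lambda_i,\lambda\}$, where $\lambda$ is Perron. Part~(2) gives the realisation statement.
\item If the system is sporadic, $K=A\ast B$ or $K=A\ast\Z$, and the corresponding one-edge free splitting is $\phi$-invariant, since it is canonical up to twisting. Apply \Cref{prop:growth_GOGs} with vertex groups as type~(a) or~(b). Free factors are conjugacy-undistorted, and the $\Z$ vertex is trivial or type~(a).
\item If $\phi|_K$ has no exponential contribution at all, the hypothesis of part~(1) of \Cref{prop:growth_estimate_fully_irreducible} fails to force $\lambda>1$. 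In this case I expect to land in the sub-polynomial alternative, via part~(1) of \Cref{prop:growth_GOGs}.
\end{enumerate}

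In case~(1), a factor of the system may itself be sub-polynomial. I treat it with $\lambda_i=1$, exactly as in that proof, so the hypotheses there are met. For the realisation statement at level $K$ in case~(2), I use $\mf{g}(\phi_i)\sq\mf{g}(\phi)$, which holds because the factors are conjugacy-undistorted. Together with the identity $\overline{\mf{o}}_{\rm top}(\phi)\sim\sum\overline{\mf{o}}_{\rm top}(\phi_i)$ from \Cref{prop:growth_GOGs}(2), this shows that the maximal $\mu$ is realised by $[\mu^n]$ on some factor.

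The main obstacle is the sporadic and polynomial bookkeeping. There are three concerns.

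\begin{itemize}
\item A sporadic invariant splitting need not have $\phi$ acting trivially on the quotient graph, so I pass to a further power first.
\item The docility hypothesis of \Cref{prop:growth_GOGs}(b) is required for every representative preserving $V$. This follows from \Cref{rmk:tameness_independent_of_representative}, but in the sub-polynomial case I must track how exponents degrade, using \Cref{rmk:sub-polynomial_independent_of_representative}. Here I only need some finite $q$, and the finite induction depth guarantees one.
\item When some $\lambda_i>1$ but the fully irreducible layer has $\lambda\le\lambda_i$, I must check that $\mu$ remains $\max_i\lambda_i$ or a Perron number larger than it. This holds because each layer only takes a maximum with a new Perron number.
\end{itemize}
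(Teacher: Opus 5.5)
Your proposal is correct and follows essentially the paper's route: pass to a power, enlarge the factor system with \Cref{lem:enlarging_factor_system} until $\phi$ is fully irreducible, and obtain the dichotomy on the factors inductively (the paper phrases this as induction on Grushko rank rather than a bottom-up chain). It then concludes with \Cref{prop:growth_estimate_fully_irreducible} in the non-sporadic case and \Cref{prop:growth_GOGs} on the invariant one-edge free splitting in the sporadic case. Your case~(3) is vacuous: in the non-sporadic fully irreducible case \Cref{prop:growth_estimate_fully_irreducible} always yields a Perron number $\lambda>1$ together with docility, so the sub-polynomial alternative only comes from \Cref{prop:growth_GOGs}(1) at sporadic levels (and in the $(1,1)$ case the invariant splitting is the HNN extension of $A$ over the trivial group, which has no $\Z$ vertex).
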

\begin{proof}
    Write $G=H_1\ast\dots\ast H_{k'}\ast F_{m'}$ where the $H_i$ are freely indecomposable. We can assume that $k'\geq 1$ since all automorphisms of free groups satisfy the thesis (see e.g.\ \cite{Levitt-GAFA}). Defining $\mc{F}'$ as the set of $G$--conjugates of the $H_i$, it is immediate that $\mc{F}'$ is a $\phi$--invariant factor system. Up to raising $\phi$ to a power, we have $\phi\in\Out(G,\mc{F}')$. 
    Up to further raising $\phi$ to a power, an iterated application of \Cref{lem:enlarging_factor_system} yields a factor system $\mc{F}>\mc{F}'$ such that $\phi\in\Out(G,\mc{F})$ and such that $\phi$ is fully irreducible for $(G,\mc{F})$. 

    Arguing by induction on the Grushko rank of $G$, we can assume that the corollary holds for the restriction of $\phi$ to each element of $\mc{F}$. If $\mc{F}$ is non-sporadic, then the corollary was the content of \Cref{prop:growth_estimate_fully_irreducible}. If instead $\mc{F}$ is sporadic, then $G$ admits a $\phi$--invariant free splitting whose vertex groups are the elements of $\mc{F}$, and in this case the corollary follows from \Cref{prop:growth_GOGs}.
\end{proof}

\bibliography{./mybib}

\begin{thebibliography}{Lym22b}

\bibitem[BFH00]{BFH1}
Mladen Bestvina, Mark Feighn, and Michael Handel.
\newblock The {T}its alternative for {${\rm Out}(F_n)$}. {I}. {D}ynamics of
  exponentially-growing automorphisms.
\newblock {\em Ann. of Math. (2)}, 151(2):517--623, 2000.

\bibitem[BFH05]{BFH2}
Mladen Bestvina, Mark Feighn, and Michael Handel.
\newblock The {T}its alternative for {${\rm Out}(F_n)$}. {II}. {A} {K}olchin
  type theorem.
\newblock {\em Ann. of Math. (2)}, 161(1):1--59, 2005.

\bibitem[BG10]{Bridson-Groves}
Martin~R. Bridson and Daniel Groves.
\newblock The quadratic isoperimetric inequality for mapping tori of free group
  automorphisms.
\newblock {\em Mem. Amer. Math. Soc.}, 203(955):xii+152, 2010.

\bibitem[BH92]{BH92}
Mladen Bestvina and Michael Handel.
\newblock Train tracks and automorphisms of free groups.
\newblock {\em Ann. of Math. (2)}, 135(1):1--51, 1992.

\bibitem[CHHL]{CHHL}
R\'{e}mi Coulon, Arnaud Hilion, Camille Horbez, and Gilbert Levitt.
\newblock In preparation.

\bibitem[Coo87]{Cooper}
Daryl Cooper.
\newblock Automorphisms of free groups have finitely generated fixed point
  sets.
\newblock {\em J. Algebra}, 111(2):453--456, 1987.

\bibitem[Cou22]{Coulon}
R\'{e}mi Coulon.
\newblock Examples of groups whose automorphisms have exotic growth.
\newblock {\em Algebr. Geom. Topol.}, 22(4):1497--1510, 2022.

\bibitem[CT94]{CT94}
Donald~J. Collins and Edward~C. Turner.
\newblock Efficient representatives for automorphisms of free products.
\newblock {\em Michigan Math. J.}, 41(3):443--464, 1994.

\bibitem[Fio25]{Fio11}
Elia Fioravanti.
\newblock Growth of automorphisms of virtually special groups.
\newblock {\em arXiv:2501.12321}, 2025.

\bibitem[FM15]{FM15}
Stefano Francaviglia and Armando Martino.
\newblock Stretching factors, metrics and train tracks for free products.
\newblock {\em Illinois J. Math.}, 59(4):859--899, 2015.

\bibitem[GH22]{GH22}
Vincent Guirardel and Camille Horbez.
\newblock Boundaries of relative factor graphs and subgroup classification for
  automorphisms of free products.
\newblock {\em Geom. Topol.}, 26(1):71--126, 2022.

\bibitem[GL17]{GL-JSJ}
Vincent Guirardel and Gilbert Levitt.
\newblock J{SJ} decompositions of groups.
\newblock {\em Ast\'{e}risque}, (395):vii+165, 2017.

\bibitem[Lev09]{Levitt-GAFA}
Gilbert Levitt.
\newblock Counting growth types of automorphisms of free groups.
\newblock {\em Geom. Funct. Anal.}, 19(4):1119--1146, 2009.

\bibitem[Lym22a]{Lyman-CTs}
Rylee~A. Lyman.
\newblock {CT}s for free products.
\newblock {\em arXiv:2203.08868}, 2022.

\bibitem[Lym22b]{Lyman-train-track}
Rylee~A. Lyman.
\newblock Train track maps on graphs of groups.
\newblock {\em Groups Geom. Dyn.}, 16(4):1389--1422, 2022.

\bibitem[Ols97]{Olsh2}
A.~Yu. Olshanski\u{\i}.
\newblock On the distortion of subgroups of finitely presented groups.
\newblock {\em Mat. Sb.}, 188(11):51--98, 1997.

\bibitem[Ols99]{Olsh1}
A.~Yu. Olshanski\u{\i}.
\newblock Distortion functions for subgroups.
\newblock In {\em Geometric group theory down under ({C}anberra, 1996)}, pages
  281--291. de Gruyter, Berlin, 1999.

\bibitem[RS97]{RS97}
Eliyahu Rips and Zlil Sela.
\newblock Cyclic splittings of finitely presented groups and the canonical
  {JSJ} decomposition.
\newblock {\em Ann. of Math. (2)}, 146(1):53--109, 1997.

\bibitem[Thu88]{Thurston-BAMS}
William~P. Thurston.
\newblock On the geometry and dynamics of diffeomorphisms of surfaces.
\newblock {\em Bull. Amer. Math. Soc. (N.S.)}, 19(2):417--431, 1988.

\end{thebibliography}
\bibliographystyle{alpha}

\end{document}